\newtheorem{tb}{Table}
\newtheorem{example}{Example}
\journal{...}
\newtheorem{theorem}{Theorem}[section]
\newtheorem{lemma}{Lemma}[section]
\newtheorem{remark}{Remark}[section]
\theoremstyle{definition}
\begin{document}

	\begin{frontmatter}

		\title{Better approximation of functions by genuine Bernstein-Durrmeyer type
			operators}
		
		\author[1]{Ana-Maria Acu}
		\author[2]{Purshottam Agrawal}
		\address[1]{Lucian Blaga University of Sibiu, Department of Mathematics and Informatics, Str. Dr. I. Ratiu, No.5-7, RO-550012  Sibiu, Romania, e-mail: anamaria.acu@ulbsibiu.ro}
		\address[2]{Department of Mathematics,
			Indian Institute of Technology Roorkee,
			Roorkee-247667,  India, e-mail: pnappfma@gmail.com}

\begin{abstract}
The main object of this paper is to construct a new genuine Bernstein-Durrmeyer type operators which have better features than the classical one. Some direct estimates for the modified genuine Bernstein-Durrmeyer operator by means of the first and second modulus of continuity are given. An asymptotic formula for the new operator is proved. Finally, some numerical examples with illustrative graphics have been added to validate the theoretical results and also compare the rate of convergence.  
\end{abstract}

\begin{keyword}
	Approximation by polynomials, genuine Bernstein-Durrmeyer operators, Voronovskaja type theorem
	\MSC[2010] 41A25, 41A36.
\end{keyword}

\end{frontmatter}

\section{Introduction}

Bernstein operators are one of the most important sequences of positive linear operators. These operators were introduced by Bernstein
\cite{Bernstein} and were intensively studied.  For more details on this topic we can refer the readers to excellent monographs \cite{G1} and \cite{G2}.  
The
Bernstein operators  are given by
\begin{equation}\label{X} B_{n}:C[0,1]\to C[0,1], \quad B_{n}(f;x)=\displaystyle\sum_{k=0}^n f\left(\frac{k}{n}\right)p_{n,k}(x),  \end{equation}
where
$$ p_{n,k}(x)={n \choose k}x^k(1-x)^{n-k},\,\,  x\in[0,1].$$
It is well known that the fundamental polynomials verify
\begin{equation}\label{et1}
p_{n,k}(x) = (1-x) \ p_{n-1,k}(x) + x \ p_{n-1,k-1}(x), \ \ 0<k< n.
\end{equation}
In a recent paper,   Khosravian-Arab et al. \cite{1} have introduced a sequence of modified Bernstein operators  as follows:
\begin{eqnarray}\label{ne1}
B_n^{M,1}(f,x) = \sum\limits_{k=0}^{n} p_{n,k}^{M,1}(x) \ f\left(\frac{k}{n} \right) , \ \ x \in [0,1],
\end{eqnarray}
\begin{align}\label{et2}
p_{n,k}^{M,1}(x) &= \alpha(x,n) \ p_{n-1,k}(x) + \alpha(1-x,n) \ p_{n-1,k-1}(x), 1\leq k\leq n-1,\\
p_{n,0}^{M,1}(x) &= \alpha(x,n)(1-x)^{n-1},\quad p_{n,n}^{M,1}(x) = \alpha(1-x,n)x^{n-1},\notag
\end{align}
and
\begin{eqnarray*}
	\alpha(x,n) = \alpha_1(n) \ x + \alpha_0(n), \,  n=0,1, \ldots,
\end{eqnarray*}
where $\alpha_0(n)$ and $\alpha_1(n)$ are  unknown sequences.  For $\alpha_1(n)=-1,$ $\alpha_0(n)=1$, obviously, (\ref{et2}) reduces to (\ref{et1}).

A Kantorovich variant of  the modified
Bernstein operators (\ref{ne1}) was introduced and studied in \cite{ana3}.

\section{The modified genuine Bernstein-Durrmeyer operators }
The genuine Bernstein-Durrmeyer operators were introduced by Chen \cite{D2} and Goodman and Sharma \cite{dif_GBD}
 and were studied  by a numbers of
authors (see \cite{ana}, \cite{dif_A2}, \cite{79}, \cite{80},  \cite{ana1}, \cite{143}). These operators are defined as follows:
\begin{align*} &{ U}_{n}(f;x)=(1-x)^n f(0)+ x^n f(1)
+(n-1)
\displaystyle\sum_{k=1}^{n-1}\left(\int_{0}^1 f(t)p_{n-2,k-1}(t)dt\right)p_{n,k}(x),\,\,  f\in C[0,1]. \end{align*}
The genuine Bernstein-Durrmeyer operators are limits of the Bernstein-Durrmeyer operators with Jacobi weights (see \cite{D11}, \cite{D12}, \cite{D100}), namely
$$  U_nf=\displaystyle\lim_{\alpha\to-1, \beta\to -1} M_n^{<\alpha,\beta>}f,\textrm{where} $$
\begin{align*}
&M_n^{<\alpha,\beta>}:C[0,1]\to\Pi_n,\,\, M_n^{<\alpha,\beta>}(f;x)=\displaystyle\sum_{k=0}^n p_{n,k}(x)\frac{\int_0^1w^{(\alpha,\beta)}(t)p_{n,k}(t)f(t)dt}{\int_0^1w^{(\alpha,\beta)}(t)p_{n,k}(t)dt},\\
& w^{(\alpha,\beta)}(t)=x^{\beta}(1-x)^{\alpha},\,x\in(0,1),\,\,\alpha,\beta>-1.
\end{align*}

In this section we introduce a new variant of the genuine Bernstein-Durrmeyer operators as follows:
\begin{align}\label{e1}
&{ U}_n^{1}(f;x)=\alpha(x,n)(1-x)^{n-1}f(0)+\alpha(1-x,n)x^{n-1}f(1)\\
&+(n-1)\displaystyle\sum_{k=1}^{n-1}\left\{\alpha(x,n)p_{n-1,k}(x)+\alpha(1-x,n)p_{n-1,k-1}(x)\right\}\int_0^1p_{n-2,k-1}(t)f(t)dt.\nonumber
\end{align}
Throughout this section we assume $U_n^{1}(e_0)=1$, namely
\begin{equation}\label{e2} 2\alpha_0(n)+\alpha_1(n)=1.\end{equation}
In the following we will consider these two cases:
\begin{equation}\label{e3}
\alpha_0(n)\geq 0, \alpha_0(n)+\alpha_1(n)\geq 0,
\end{equation}
\begin{equation}\label{e4} \alpha_0(n)<0 \textrm{ or } \alpha_1(n)+\alpha_0(n)<0.
\end{equation}
\begin{remark} If the unknown sequences $\alpha_i(n),i=1,2$ verify conditions (\ref{e2}) and (\ref{e3}), it follows that
	$$ 0\leq \alpha_0(n)\leq 1\textrm{ and } -1\leq \alpha_1(n)\leq 1. $$
Thus the sequences $\alpha_i(n),i=1,2$ are bounded and the operator (\ref{e1}) is positive. If the  sequences $\alpha_i(n),i=1,2$ verify conditions (\ref{e2}) and (\ref{e4}), we obtain
	$$ \big(\alpha_0(n)<0,\,\,\alpha_1(n)+\alpha_0(n)>1\big)\textrm{ or }\big(\alpha_1(n)+\alpha_0(n)<0,\,\, \alpha_0(n)>1\big), $$
	hence the operator (\ref{e1}) is not positive.
\end{remark}
Denote $m_{n,k}^{1}(x):={ U}_n^{1}(e_k;x)$, $\mu_{n,k}^{1}(x):={ U}_n^{1}\left((t-x)^k;x\right)$, where $e_k(t)=t^k$, $k=0,1,\dots.$
\begin{lemma}\label{l1}
	The modified genuine Bernstein-Durrmeyer operators (\ref{e1}) verify:
	\begin{itemize}
		\item[i)] $m_{n,0}^{1}(x)=1$;
		\item[ii)] $m_{n,1}^{1}(x)=\displaystyle\frac{1}{n}\left\{xn+(1-2x)(1-\alpha_0(n))\right\}$;
		\item[iii)] $m_{n,2}^{1}(x)=\displaystyle\frac{1}{n(n+1)}\left\{x^2n^2+(4\alpha_0(n)x-2\alpha_0(n)-5x+4)xn\right.
		\left. +2(1-x)(1-2x)(1-\alpha_0(n))\right\}$.
		\end{itemize}
\end{lemma}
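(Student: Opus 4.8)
\emph{Sketch of the intended argument.} The plan is to compute $m_{n,0}^{1},m_{n,1}^{1},m_{n,2}^{1}$ directly from the definition (\ref{e1}), reducing everything to two standard ingredients. First I would record that, for $1\le k\le n-1$, the quantity $(n-1)\int_0^1 p_{n-2,k-1}(t)\,t^{\,j}\,dt$ equals $1$, $k/n$, $k(k+1)/\bigl(n(n+1)\bigr)$ for $j=0,1,2$ respectively, all immediate from $\int_0^1 t^{a}(1-t)^{b}\,dt=a!\,b!/(a+b+1)!$. Substituting these into (\ref{e1}) with $f=e_0,e_1,e_2$ leaves the two boundary terms plus a fixed linear combination of $\sum_{k=1}^{n-1}w(k)\,p_{n-1,k}(x)$ and $\sum_{k=1}^{n-1}w(k)\,p_{n-1,k-1}(x)$, with $w(k)\in\{1,\ k,\ k(k+1)\}$; these are then evaluated from the Bernstein-basis moments $\sum_{k=0}^{m}p_{m,k}(x)=1$, $\sum_{k=0}^{m}k\,p_{m,k}(x)=mx$, $\sum_{k=0}^{m}k(k-1)p_{m,k}(x)=m(m-1)x^2$ with $m=n-1$, after the reindexing $k\mapsto k+1$ in the shifted sums.

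A tidier way to organize exactly this computation uses (\ref{et1}). Writing $\alpha(x,n)=(1-x)+(1-\alpha_0(n))(2x-1)$ — which uses only the normalization (\ref{e2}) — and applying $p_{n,k}(x)=(1-x)p_{n-1,k}(x)+x\,p_{n-1,k-1}(x)$ to the bracket in (\ref{e1}), one obtains the exact identity
\[ U_n^{1}(f;x)=U_n(f;x)+(1-\alpha_0(n))(2x-1)\,W_n(f;x), \]
where $W_n(f;x)=(1-x)^{n-1}f(0)-x^{n-1}f(1)+(n-1)\sum_{k=1}^{n-1}\bigl(p_{n-1,k}(x)-p_{n-1,k-1}(x)\bigr)\int_0^1 p_{n-2,k-1}(t)f(t)\,dt$. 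Since the moments of the classical genuine Bernstein--Durrmeyer operator are known ($U_n(e_0;x)=1$, $U_n(e_1;x)=x$, $U_n(e_2;x)=x^2+2x(1-x)/(n+1)$), it then suffices to show $W_n(e_0;x)=0$, $W_n(e_1;x)=-1/n$ and $W_n(e_2;x)=-2\bigl((n-1)x+1\bigr)/\bigl(n(n+1)\bigr)$, each being a one-line calculation with the binomial moments above. In that calculation the tail powers $x^{n-1}$ (and, for $e_0$, also $(1-x)^{n-1}$) produced by the reindexing cancel exactly against the boundary terms $(1-x)^{n-1}f(0)$ and $x^{n-1}f(1)$, which is precisely why the $m_{n,j}^{1}$ come out as genuine polynomials of low degree.

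What remains is pure algebra: substitute the three values of $W_n(e_j;x)$, expand $(2x-1)\bigl((n-1)x+1\bigr)$, and collect coefficients of the powers of $x$ and of $\alpha_0(n)$ so as to match the expressions displayed in (ii) and (iii); case (i) is immediate, since $W_n(e_0)\equiv0$ gives $U_n^{1}(e_0)=1$, i.e.\ (\ref{e2}). The only real obstacle is bookkeeping — keeping the index shift $k\mapsto k-1$ straight, tracking which boundary and tail terms cancel, and performing the final rearrangement into exactly the stated form; nothing here uses the positivity or boundedness of the $\alpha_i(n)$ from the preceding Remark, since all the identities are algebraic in the single parameter $\alpha_0(n)$.
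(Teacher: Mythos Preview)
The paper states Lemma~\ref{l1} without proof (it is presented as a direct computation), so there is no argument in the text to compare yours against. Your approach is correct: the beta-integral values $(n-1)\int_0^1 p_{n-2,k-1}(t)t^{j}\,dt$ for $j=0,1,2$ and the Bernstein moment identities you list are exactly what is needed, and your decomposition
\[
U_n^{1}(f;x)=U_n(f;x)+(1-\alpha_0(n))(2x-1)\,W_n(f;x),
\]
obtained from $\alpha(x,n)=(1-x)+(1-\alpha_0(n))(2x-1)$ and $\alpha(1-x,n)=x-(1-\alpha_0(n))(2x-1)$ together with (\ref{et1}), is a genuinely tidy way to organise the calculation. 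I checked your claimed values $W_n(e_0;x)=0$, $W_n(e_1;x)=-1/n$, $W_n(e_2;x)=-2\bigl((n-1)x+1\bigr)/\bigl(n(n+1)\bigr)$ and the resulting expressions for $m_{n,1}^{1}$ and $m_{n,2}^{1}$; they all match the statement after the bookkeeping you describe. Nothing is missing.
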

\begin{lemma}\label{l2}
	The following statements hold:
	\begin{itemize}
		\item[i)] $\mu_{n,1}^{1}(x)=\displaystyle\frac{1}{n}(1-2x)(1-\alpha_0(n))$;
		\item[ii)] $\mu_{n,2}^{1}(x)=\displaystyle\frac{2}{n(n+1)}\left\{x(1-x)n+(2x-1)^2(1-\alpha_0(n))\right\}$;
		\item[iii)]$\mu_{n,4}^{1}(x)=\displaystyle\frac{12}{n(n+1)(n+2)(n+3)}\left\{x^2(1-x)^2n^2-x(1-x)n\right.\\
		\left.\cdot\left[4\alpha_0(n)(1-2x)^2+23x(1-x)-6\right]+2(1-2x)^4\left[1-\alpha_0(n)\right]
		\right\}.$
	\end{itemize}
\end{lemma}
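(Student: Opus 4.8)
The plan is to reduce all three central moments to the ordinary moments $m_{n,j}^{1}(x)=U_n^{1}(e_j;x)$. Since $U_n^{1}$ is linear, expanding $(t-x)^{k}=\sum_{j=0}^{k}\binom{k}{j}(-x)^{k-j}t^{j}$ and applying $U_n^{1}$ gives
\begin{equation*}
\mu_{n,k}^{1}(x)=\sum_{j=0}^{k}\binom{k}{j}(-x)^{k-j}\,m_{n,j}^{1}(x).
\end{equation*}
For $k=1,2$ only $m_{n,0}^{1},m_{n,1}^{1},m_{n,2}^{1}$ occur, and these are supplied by Lemma~\ref{l1}. Substituting them into $\mu_{n,1}^{1}=m_{n,1}^{1}-x$ and $\mu_{n,2}^{1}=m_{n,2}^{1}-2x\,m_{n,1}^{1}+x^{2}$ and collecting terms (the $n^{2}$ and $n$ contributions of $m_{n,2}^{1}$ cancel against $-2x\,m_{n,1}^{1}$ and $x^{2}$) yields parts (i) and (ii) after routine simplification, so the real content is part (iii).

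For (iii) one also needs $m_{n,3}^{1}$ and $m_{n,4}^{1}$, which are not recorded in Lemma~\ref{l1}; I would produce them by the same computation that proves that lemma. In \eqref{e1} the term $\alpha(x,n)(1-x)^{n-1}e_j(0)$ vanishes, the term $\alpha(1-x,n)x^{n-1}e_j(1)=\alpha(1-x,n)x^{n-1}$ survives, and for the summation one uses the Beta-integral evaluation
\begin{equation*}
\int_{0}^{1}p_{n-2,k-1}(t)\,t^{j}\,dt=\binom{n-2}{k-1}\frac{(k+j-1)!\,(n-k-1)!}{(n+j-1)!}
\end{equation*}
together with the classical identities expressing $\sum_{k}k^{m}p_{n-1,k}(x)$ (and its shift $\sum_{k}k^{m}p_{n-1,k-1}(x)$) as polynomials in $x$. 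Collecting powers of $x$ and of $n$ yields closed forms for $m_{n,3}^{1}$ and $m_{n,4}^{1}$, polynomial in $x$ with coefficients rational in $n$ and affine in $\alpha_0(n)$. A convenient way to organise, and to cross-check, this step is to note that, after imposing \eqref{e2} (so $\alpha_1(n)=1-2\alpha_0(n)$), one has $\alpha(x,n)=\alpha_0(n)(1-x)+(1-\alpha_0(n))x$, whence
\begin{equation*}
U_n^{1}=\alpha_0(n)\,U_n+\big(1-\alpha_0(n)\big)\,W_n,
\end{equation*}
where
\begin{align*}
W_n(f;x)=x(1-x)^{n-1}f(0)&+(1-x)x^{n-1}f(1)\\
&+(n-1)\sum_{k=1}^{n-1}\big[x\,p_{n-1,k}(x)+(1-x)\,p_{n-1,k-1}(x)\big]\int_{0}^{1}p_{n-2,k-1}(t)f(t)\,dt,
\end{align*}
so that only the moments of the auxiliary operator $W_n$ have to be computed from scratch, those of the classical $U_n$ being well known.

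Finally I would assemble
\begin{equation*}
\mu_{n,4}^{1}(x)=m_{n,4}^{1}(x)-4x\,m_{n,3}^{1}(x)+6x^{2}\,m_{n,2}^{1}(x)-4x^{3}\,m_{n,1}^{1}(x)+x^{4}
\end{equation*}
and simplify. The main obstacle is exactly this final simplification: after clearing the common denominator $n(n+1)(n+2)(n+3)$, the $n^{4}$ and $n^{3}$ coefficients of the numerator must vanish (this is forced by $U_n^{1}$ reproducing constants and by the leading-order expansion of $(t-x)^{4}$), and the remaining degree-two-in-$n$ numerator has to be recognised in the factored shape of the statement --- in particular the coefficient of $-x(1-x)n$ must organise into $4\alpha_0(n)(1-2x)^{2}+23x(1-x)-6$, the $n^{2}$-coefficient into $x^{2}(1-x)^{2}$, and the $n$-free part into $2(1-2x)^{4}(1-\alpha_0(n))$ --- so careful bookkeeping of the $\alpha_0(n)$-dependent pieces (or a symbolic computation) is advisable. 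Parts (i) and (ii) serve as consistency checks on the lower-order coefficients, and setting $\alpha_0(n)=1$ must reproduce the known fourth central moment of the genuine Bernstein-Durrmeyer operator $U_n$.
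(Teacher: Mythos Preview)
The paper states Lemma~\ref{l2} without proof; like Lemma~\ref{l1} it is treated as a purely computational fact, so there is no argument in the paper to compare against. Your strategy---reducing the central moments to the ordinary moments via the binomial expansion, reading off $m_{n,0}^{1},m_{n,1}^{1},m_{n,2}^{1}$ from Lemma~\ref{l1}, and computing $m_{n,3}^{1},m_{n,4}^{1}$ directly from the Beta integral for $\int_{0}^{1}p_{n-2,k-1}(t)t^{j}\,dt$ together with the factorial-moment identities for $\sum_{k}k^{m}p_{n-1,k}(x)$---is correct and is exactly the kind of routine verification the authors are implicitly deferring to the reader.

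Your observation that, under \eqref{e2}, one has $\alpha(x,n)=\alpha_0(n)(1-x)+(1-\alpha_0(n))x$ and hence $U_n^{1}=\alpha_0(n)\,U_n+(1-\alpha_0(n))\,W_n$ is a genuinely useful organisational device that the paper does not make explicit: it lets you import the well-known central moments of the classical genuine Bernstein--Durrmeyer operator $U_n$ wholesale and isolate all the new work in the auxiliary operator $W_n$. It also gives the sanity check you mention (setting $\alpha_0(n)=1$ must collapse everything to the $U_n$ formulas), and it explains structurally why every formula in the lemma is affine in $\alpha_0(n)$. The only caveat is that the ``main obstacle'' you flag---the final simplification for $\mu_{n,4}^{1}$---is real but purely mechanical; a symbolic computation (which the paper evidently relied on, given the later Maple examples) disposes of it, so your write-up should either present the key intermediate coefficients or simply state that the identity has been verified symbolically.
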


In the following we will give a direct estimate for the modified genuine Bernstein-Durrmeyer operator ${ U}_n^{1}$ by means of the first modulus of continuity $\omega(f,\delta)$.
\begin{theorem}
	Let $f$ be a bounded function for $x\in[0,1]$. If $\alpha_1(n)$ is a bounded sequence, then
	$$ \|{ U}_n^{1}f-f\|\leq 2 \left(3|\alpha_1(n)|+1\right)\omega\left(f;\frac{1}{\sqrt{n}}\right) , $$
	where $\|\cdot\|$ is the uniform norm on the interval $[0,1]$. 
\end{theorem}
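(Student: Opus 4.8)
The plan is to deduce the estimate from two genuinely \emph{positive} linear operators, for which the classical quantitative estimate by the first modulus of continuity is available. Writing $\alpha(x,n)=\alpha_1(n)x+\alpha_0(n)$ and splitting the interior kernel of $U_n^{1}$ by means of (\ref{et1}) as $\alpha(x,n)p_{n-1,k}(x)+\alpha(1-x,n)p_{n-1,k-1}(x)$, I would introduce the positive linear operators
\[
A_n(f;x)=(1-x)^{n-1}f(0)+(n-1)\sum_{k=1}^{n-1}p_{n-1,k}(x)\int_0^1 p_{n-2,k-1}(t)f(t)\,dt,
\]
\[
B_n(f;x)=x^{n-1}f(1)+(n-1)\sum_{k=1}^{n-1}p_{n-1,k-1}(x)\int_0^1 p_{n-2,k-1}(t)f(t)\,dt,
\]
so that, comparing with (\ref{e1}), $U_n^{1}(f;x)=\alpha(x,n)\,A_n(f;x)+\alpha(1-x,n)\,B_n(f;x)$. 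This splitting is legitimate for any bounded $f$, regardless of whether $U_n^{1}$ is itself positive.

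Next I would record the needed moments. Using $\int_0^1 p_{n-2,k-1}(t)\,dt=\frac1{n-1}$ together with $\sum_k p_{n-1,k}(x)=1$ one gets $A_n(e_0;x)=B_n(e_0;x)=1$ (which, via $\alpha(x,n)+\alpha(1-x,n)=2\alpha_0(n)+\alpha_1(n)$, re-proves (\ref{e2})$\Rightarrow U_n^{1}(e_0)=1$). From the beta integrals $\int_0^1 t\,p_{n-2,k-1}(t)\,dt=\frac{k}{(n-1)n}$, $\int_0^1 t^2 p_{n-2,k-1}(t)\,dt=\frac{k(k+1)}{(n-1)n(n+1)}$ and the sums $\sum_k k\,p_{n-1,k}(x)=(n-1)x$, $\sum_k k^2 p_{n-1,k}(x)=(n-1)(n-2)x^2+(n-1)x$, a short computation gives
\[
A_n\big((t-x)^2;x\big)=\frac{2\big[(n-1)x(1-x)+x^2\big]}{n(n+1)},\qquad B_n\big((t-x)^2;x\big)=\frac{2\big[(n-1)x(1-x)+(1-x)^2\big]}{n(n+1)},
\]
and since $(n-1)x(1-x)+x^2\le\frac{n-1}{4}+1$ one concludes $A_n((t-x)^2;x)\le\frac1n$ and, symmetrically, $B_n((t-x)^2;x)\le\frac1n$ for every $n\ge2$ and $x\in[0,1]$.

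Finally I would apply to each of $A_n,B_n$ the standard argument for positive operators reproducing constants: from $|f(t)-f(x)|\le(1+\delta^{-1}|t-x|)\omega(f;\delta)$, positivity, and the Cauchy--Schwarz inequality,
\[
|A_n(f;x)-f(x)|\le\Big(1+\tfrac1\delta\sqrt{A_n((t-x)^2;x)}\Big)\omega(f;\delta)\le\Big(1+\tfrac{1}{\delta\sqrt n}\Big)\omega(f;\delta),
\]
and likewise for $B_n$. Since $\alpha(x,n)+\alpha(1-x,n)=1$, we have $U_n^{1}(f;x)-f(x)=\alpha(x,n)\big(A_n(f;x)-f(x)\big)+\alpha(1-x,n)\big(B_n(f;x)-f(x)\big)$, so choosing $\delta=1/\sqrt n$ gives $|U_n^{1}(f;x)-f(x)|\le 2\big(|\alpha(x,n)|+|\alpha(1-x,n)|\big)\omega(f;1/\sqrt n)$. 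Bounding the coefficients via (\ref{e2}): $\alpha_0(n)=\frac{1-\alpha_1(n)}{2}$, hence $|\alpha_0(n)|\le\frac{1+|\alpha_1(n)|}{2}$, and for $x\in[0,1]$, $|\alpha(x,n)|\le|\alpha_1(n)|+|\alpha_0(n)|\le\frac{3|\alpha_1(n)|+1}{2}$, and the same for $|\alpha(1-x,n)|$; therefore $|\alpha(x,n)|+|\alpha(1-x,n)|\le 3|\alpha_1(n)|+1$, which yields the stated inequality.

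The moment computations and the sums are routine. The only structural point requiring care is the decomposition into $A_n$ and $B_n$ and checking that both are positive and reproduce $e_0$; once that is set up, the non-positivity of $U_n^{1}$ is harmless, since the two pieces are estimated separately and only recombined through the bounded coefficients $\alpha(\cdot,n)$. The main (minor) obstacle is calibrating the constant exactly: one must use the crude bound $|\alpha(x,n)|\le\frac{3|\alpha_1(n)|+1}{2}$ rather than the sharper $\frac{1+|\alpha_1(n)|}{2}$, and one must take $\delta=1/\sqrt n$ precisely so that $1+\frac1{\delta\sqrt n}=2$.
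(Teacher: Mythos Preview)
Your proof is correct and follows the same overall strategy as the paper: both arguments split $U_n^{1}$ into the $\alpha(x,n)$–piece and the $\alpha(1-x,n)$–piece, bound $|\alpha(\cdot,n)|\le\tfrac12(3|\alpha_1(n)|+1)$ via \eqref{e2}, and then exploit the positivity of the two individual pieces together with a second-moment estimate. The differences are purely organisational. You make the decomposition explicit by naming the positive operators $A_n$ and $B_n$, compute their central second moments, and invoke the standard Cauchy--Schwarz estimate $|L(f;x)-f(x)|\le\big(1+\delta^{-1}\sqrt{L((t-x)^2;x)}\big)\omega(f;\delta)$; the paper instead uses the quadratic modulus inequality $|f(t)-f(x)|\le\big(1+(t-x)^2/\delta^2\big)\omega(f;\delta)$ and carries the sums through by hand, arriving at the explicit polynomial $\frac{1}{n+1}\big[(-2x^2+2x+1)n+4x^2-4x+2\big]$ before bounding it by $2$. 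Your route is slightly cleaner and more transferable (the same $A_n,B_n$ decomposition would serve in any further estimate for $U_n^{1}$), while the paper's direct computation yields a marginally sharper intermediate expression; the final constant $2(3|\alpha_1(n)|+1)$ is identical.
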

\begin{proof}
	It is known that the modulus of continuity $\omega(f;\delta)$ verifies
	\begin{align}\label{e7} |f(t)-f(x)|\leq \omega(f;\delta)\left(\displaystyle\frac{(t-x)^2}{\delta^2}+1\right). \end{align}
	It follows from (\ref{e2}) that
	\begin{align}\label{ec23}
	|\alpha(x,n)|&=|\alpha_1(n)x+\alpha_0(n)|\leq |\alpha_1(n)|+|\alpha_0(n)|\nonumber\\
	&=|\alpha_1(n)|+\left|\frac{1-\alpha_1(n)}{2}\right|\leq \frac{3}{2}|\alpha_1(n)|+\frac{1}{2}.
	\end{align}
	In a similar way, we get
	$$ |\alpha(1-x,n)|\leq\dfrac{3}{2} |\alpha_1(n)|+\dfrac{1}{2}.$$
	 Therefore,
	using Lemma \ref{l1}, condition (\ref{e2}) and 	chosing $\delta=\displaystyle\frac{1}{\sqrt{n}}$, we get
	\begin{align*} &\left|{U}_n^{1}(f;x)-f(x)\right|\\
	&\leq |{\alpha(x,n)}|(1-x)^{n-1}|f(0)-f(x)| +|\alpha(1-x,n)|x^{n-1}|f(1)-f(x)|\\
	&+(n-1)\displaystyle\sum_{k=1}^{n-1}|\alpha(x,n)|p_{n-1,k}(x)\int_0^1p_{n-2,k-1}(t)|f(t)-f(x)|dt\\
	&+(n-1)\displaystyle\sum_{k=1}^{n-1}|\alpha(1-x,n)|p_{n-1,k-1}(x)\int_0^1p_{n-2,k-1}(t)|f(t)-f(x)|dt\\
	&\leq\left(\displaystyle\frac{3}{2}|\alpha_1(n)|+\frac{1}{2}\right)\omega\left(f;\frac{1}{\sqrt{n}}\right)\left\{(1-x)^{n-1}(nx^2+1)+x^{n-1}(n(1-x)^2+1)\right.\\
	&+(n-1)\displaystyle\sum_{k=1}^{n-1}p_{n-1,k}(x)\int_0^1p_{n-2,k-1}(t)\left(n(t-x)^2+1\right)dt\\
	&+\left.(n-1)\displaystyle\sum_{k=1}^{n-1}p_{n-1,k-1}(x)\int_0^1p_{n-2,k-1}(t)\left(n(t-x)^2+1\right)dt\right\}\\
	&=\displaystyle\frac{1}{n+1}\left(\displaystyle 3|\alpha_1(n)|+1\right)\omega\left(f;\frac{1}{\sqrt{n}}\right)\left[(-2x^2+2x+1)n+4x^2-4x+2\right]\\
	&\leq 2\left(\displaystyle 3|\alpha_1(n)|+1\right)\omega\left(f;\frac{1}{\sqrt{n}}\right).
	\end{align*}
\end{proof}
\begin{remark}\label{r1} If $f\in C[0,1]$, then $\displaystyle\lim_{n\to\infty}\omega\left(f,\frac{1}{\sqrt{n}}\right)=0$. Therefore,  $({ U}_n^{1})_n$ converges uniformly on $[0,1]$ for   $\alpha_1(n)$  a bounded sequence  and $f\in C[0,1]$.
\end{remark}

\begin{theorem}\label{t2}
	Let $\alpha_i(n),i=0,1$ be  convergent sequences that satisfy the conditions (\ref{e3}) and $l_0=\displaystyle\lim_{n\to\infty}\alpha_0(n)$.  If $f^{\prime\prime}\in C[0,1]$, then
	$$
	\displaystyle\lim_{n\to\infty} n\left({ U}_n^{1}(f;x)-f(x)\right)=(1-2x)(1-l_0)f^\prime(x)+x(1-x) f^{\prime\prime}(x),$$
	uniformly on $[0,1]$.
\end{theorem}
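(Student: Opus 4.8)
The plan is to run the classical Voronovskaja argument, based on a second-order Taylor expansion of $f$. Since $f^{\prime\prime}\in C[0,1]$, for a fixed $x\in[0,1]$ we may write
$$ f(t)=f(x)+f^{\prime}(x)(t-x)+\frac{1}{2}f^{\prime\prime}(x)(t-x)^2+r(t,x)(t-x)^2, $$
where $r(t,x)\to 0$ as $t\to x$. Because $f^{\prime\prime}$ is uniformly continuous on $[0,1]$, the function $r$ satisfies $|r(t,x)|\leq \frac{1}{2}\omega\!\left(f^{\prime\prime},|t-x|\right)$, so $|r(t,x)|$ is bounded on $[0,1]\times[0,1]$ and $r(t,x)\to 0$ uniformly in $x$ as $|t-x|\to 0$.

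First I would apply ${ U}_n^{1}$ to this identity. Using linearity and ${ U}_n^{1}(e_0)=1$ we obtain
$$ { U}_n^{1}(f;x)-f(x)=f^{\prime}(x)\,\mu_{n,1}^{1}(x)+\frac{1}{2}f^{\prime\prime}(x)\,\mu_{n,2}^{1}(x)+{ U}_n^{1}\!\left(r(\cdot,x)(\cdot-x)^2;x\right). $$
Multiplying by $n$ and using Lemma \ref{l2}: part (i) gives $n\,\mu_{n,1}^{1}(x)=(1-2x)(1-\alpha_0(n))\to(1-2x)(1-l_0)$ uniformly, since $\alpha_0(n)\to l_0$; part (ii) gives $n\,\mu_{n,2}^{1}(x)=\frac{2}{n+1}\left\{x(1-x)n+(2x-1)^2(1-\alpha_0(n))\right\}\to 2x(1-x)$ uniformly, using that $\alpha_0(n)$ is bounded under (\ref{e3}). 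Hence the first two terms tend to $(1-2x)(1-l_0)f^{\prime}(x)+x(1-x)f^{\prime\prime}(x)$ uniformly on $[0,1]$, and it remains to prove that $n\,{ U}_n^{1}\!\left(r(\cdot,x)(\cdot-x)^2;x\right)\to 0$ uniformly.

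To handle the remainder I would use that, under (\ref{e3}), ${ U}_n^{1}$ is a positive operator, so $\left|{ U}_n^{1}(g;x)\right|\leq { U}_n^{1}(|g|;x)$. Given $\varepsilon>0$, choose $\delta>0$ with $|r(t,x)|<\varepsilon$ whenever $|t-x|<\delta$; splitting the action of ${ U}_n^{1}$ according to whether $|t-x|<\delta$ or $|t-x|\geq\delta$, and bounding $(t-x)^2\leq\delta^{-2}(t-x)^4$ on the second range while using the global bound on $|r|$, one gets
$$ \left|{ U}_n^{1}\!\left(r(\cdot,x)(\cdot-x)^2;x\right)\right|\leq \varepsilon\,\mu_{n,2}^{1}(x)+C\,\delta^{-2}\,\mu_{n,4}^{1}(x), $$
where $C$ is a constant depending only on $\sup_{[0,1]}|f^{\prime\prime}|$. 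By Lemma \ref{l2} one has $n\,\mu_{n,2}^{1}(x)=O(1)$ uniformly (indeed $\to 2x(1-x)$) and $\mu_{n,4}^{1}(x)=O(n^{-2})$ uniformly, hence $n\,\mu_{n,4}^{1}(x)\to 0$. Multiplying the last display by $n$ and letting $n\to\infty$ yields $\limsup_{n}\ \sup_{x\in[0,1]} n\left|{ U}_n^{1}\!\left(r(\cdot,x)(\cdot-x)^2;x\right)\right|\leq \varepsilon\cdot\sup_{x\in[0,1]}2x(1-x)=\varepsilon/2$; since $\varepsilon>0$ is arbitrary, this term vanishes uniformly and the theorem follows.

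The main obstacle is precisely this last uniform estimate of the Taylor remainder. It relies on three ingredients already in place: the positivity of ${ U}_n^{1}$ in case (\ref{e3}) (so that $|{ U}_n^{1}(g)|\leq { U}_n^{1}(|g|)$), the uniform continuity of $f^{\prime\prime}$ on $[0,1]$ (so that the split threshold $\delta$ can be chosen independently of $x$), and the precise orders $\mu_{n,2}^{1}=O(n^{-1})$, $\mu_{n,4}^{1}=O(n^{-2})$ supplied by Lemma \ref{l2}. The convergence of $\alpha_0(n)$ enters only through the first-moment term, where it produces the factor $1-l_0$ in the limit.
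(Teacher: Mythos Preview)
Your argument is correct and follows the same Voronovskaja scheme as the paper, but the handling of the Taylor remainder differs. The paper applies the Cauchy--Schwarz inequality for the positive operator ${U}_n^{1}$ to obtain
\[
n\,{U}_n^{1}\!\left(\xi(\cdot,x)(\cdot-x)^2;x\right)\leq \sqrt{{U}_n^{1}\!\left(\xi^2(\cdot,x);x\right)}\;\sqrt{n^2\,\mu_{n,4}^{1}(x)},
\]
and then kills the first factor by invoking Remark~\ref{r1} (uniform convergence of ${U}_n^{1}$ to the identity on $C[0,1]$), while the second factor stays bounded by Lemma~\ref{l2}\,iii). Your route is the direct ``near/far'' split, bounding $|r(\cdot,x)|(\cdot-x)^2\leq \varepsilon(\cdot-x)^2+C\delta^{-2}(\cdot-x)^4$ pointwise and then using only positivity and the moment orders $\mu_{n,2}^{1}=O(n^{-1})$, $\mu_{n,4}^{1}=O(n^{-2})$. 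This is slightly more elementary (no Cauchy--Schwarz, no appeal to the prior uniform-convergence theorem), and your explicit bound $|r(t,x)|\leq\tfrac12\,\omega(f'',|t-x|)$ makes the \emph{uniformity in $x$} transparent---a point the paper's proof leaves implicit. Either approach is standard; the paper's is a touch shorter, yours a touch more self-contained.
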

\begin{proof}
	Applying the modified genuine Bernstein-Durrmeyer operators ${ U}_n^{1}$ to the Taylor's formula, we obtain
	\begin{align*}{U}_n^{1}(f;x)-f(x)&={ U}_n^{1}(t-x;x)f^{\prime}(x)+\displaystyle\frac{1}{2}{ U}_n^{1}\left((t-x)^2;x\right)f^{\prime\prime}(x)\\
	&+{ U}_n^{1}\left(\xi(t,x)(t-x)^2;x\right), \end{align*}	
	where $\xi\in C[0,1]$ and $\displaystyle\lim_{t\to x}\xi(t,x)=0$.
	
	Using the Cauchy-Schwarz inequality, we get
	$$ n{ U}_n^{1}\left(\xi(t,x)(t-x)^2;x\right)\leq\sqrt{{ U}_n^{1}\left(\xi^2(t,x);x\right)} \sqrt{{n^2 U}_n^{1}\left((t-x)^4;x\right)}. $$
	From Lemma \ref{l2} we have $\displaystyle\lim_{n\to\infty}n^2{ U}_n^{1}\left((t-x)^4;x\right)=12x^2(1-x)^2$.	Since\linebreak $\xi^2(x,x)=0$ and $\xi^2(\cdot,x)\in C[0,1]$, by Remark \ref{r1}, we obtain
	$$ \displaystyle\lim_{n\to\infty}{ U}_n^{1}\left(\xi^2(t,x);x\right)=0 $$
	uniformly with respect to $x\in[0,1]$. Therefore, 
	$$ \displaystyle\lim_{n\to\infty}n{ U}_n^{1}\left(\xi(t,x)(t-x)^2;x\right)=0. $$
	Applying Lemma \ref{l2}, the theorem is proved.
\end{proof}

Now, we extend the results from  Theorem \ref{t2} when the sequences $\alpha_i(n)$, $i=0,1$, satisfy the conditions  (\ref{e4}), namely the operator ${ U}_n^{1}$ is nonpositive.

\begin{theorem}\label{t2.5} Let $\alpha_i(n), i=0,1$ be bounded convergent sequences which satisfy  \eqref{e4} and $l_0=\displaystyle\lim_{n\to\infty} \alpha_0(n)$. If $f\in C[0,1]$ and $f^{\prime\prime}$ exists at a certain point $x\in[0,1]$, then we have
	\begin{equation}\label{e5}
	\displaystyle\lim_{n\to\infty} n\left({ U}_n^{1}(f;x)-f(x)\right)=(1-2x)(1-l_0)f^\prime(x)+x(1-x) f^{\prime\prime}(x),\end{equation}
	Moreover the relation \eqref{e5} holds uniformly on $[0,1]$ if $f^{\prime\prime}\in C[0,1].$
\end{theorem}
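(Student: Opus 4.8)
The plan is to follow the same Voronovskaja-type strategy used in the proof of Theorem \ref{t2}, but replacing the positivity-dependent estimate of the remainder by an argument that only uses pointwise differentiability at $x$. Write the local Taylor expansion at the fixed point $x$: for $f\in C[0,1]$ with $f''(x)$ existing,
\begin{equation*}
f(t)=f(x)+f'(x)(t-x)+\tfrac12 f''(x)(t-x)^2+r(t,x)(t-x)^2,
\end{equation*}
where $r(\cdot,x)$ is bounded on $[0,1]$ and $\lim_{t\to x} r(t,x)=0$. Applying ${ U}_n^{1}$ and using $U_n^{1}(e_0)=1$ gives
\begin{equation*}
{ U}_n^{1}(f;x)-f(x)=f'(x)\,\mu_{n,1}^{1}(x)+\tfrac12 f''(x)\,\mu_{n,2}^{1}(x)+{ U}_n^{1}\!\big(r(t,x)(t-x)^2;x\big).
\end{equation*}
By Lemma \ref{l2}, $n\,\mu_{n,1}^{1}(x)\to(1-2x)(1-l_0)$ and $n\,\mu_{n,2}^{1}(x)\to 2x(1-x)$, so the first two terms already produce the claimed limit; everything reduces to showing $n\,{ U}_n^{1}\big(r(t,x)(t-x)^2;x\big)\to 0$.

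The main obstacle is that, under \eqref{e4}, ${ U}_n^{1}$ is not positive, so the Cauchy–Schwarz step of Theorem \ref{t2} is not directly available (it used $U_n^{1}(\xi^2;x)\ge 0$ and monotonicity). I would handle this by splitting ${ U}_n^{1}$ into its ``positive part'' and a controlled correction. Concretely, write $\alpha(x,n)=\big(1-x\big)+\big(\alpha_1(n)+1\big)\big(x-\tfrac12\big)$ using \eqref{e2}, so that
\begin{equation*}
{ U}_n^{1}(g;x)={ U}_n(g;x)+\big(\alpha_1(n)+1\big)\,R_n(g;x),
\end{equation*}
where ${ U}_n$ is the classical genuine Bernstein–Durrmeyer operator (positive) and $R_n$ is the operator obtained from the $\big(x-\tfrac12\big)$ and $-\big(x-\tfrac12\big)$ coefficients; $R_n$ is a difference of two positive operators, each with total mass $O(1)$ uniformly in $x$. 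Since $(\alpha_1(n))_n$ is bounded, it suffices to bound $n\,{ U}_n\big(|r(t,x)|(t-x)^2;x\big)$ and $n\,R_n^{\pm}\big(|r(t,x)|(t-x)^2;x\big)$ separately, where $R_n^{\pm}$ are the two positive pieces of $R_n$.

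For each such positive functional $L_n$ (namely $(n-1){ U}_n$, or the pieces of $R_n$), apply Cauchy–Schwarz exactly as before: given $\varepsilon>0$ choose $\delta>0$ with $|r(t,x)|<\varepsilon$ for $|t-x|<\delta$, and split
\begin{equation*}
n\,L_n\big(|r(t,x)|(t-x)^2;x\big)\le \varepsilon\, n\,L_n\big((t-x)^2;x\big)+\|r(\cdot,x)\|_\infty\,\frac{n}{\delta^2}\,L_n\big((t-x)^4;x\big).
\end{equation*}
It remains to check that $n\,L_n\big((t-x)^2;x\big)=O(1)$ and $n\,L_n\big((t-x)^4;x\big)\to 0$ for each piece. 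For the ${ U}_n$ piece this is classical (or follows from Lemma \ref{l2} applied with $\alpha_0\equiv 1$), and for the $R_n^{\pm}$ pieces one computes the second and fourth central moments of the auxiliary operators $\sum p_{n-1,k}(x)\int_0^1 p_{n-2,k-1}(t)(\cdot)\,dt$ and its mirror image — these are elementary beta-integral computations of the same type that underlie Lemma \ref{l2}, giving fourth central moments of order $n^{-2}$, hence $n\,L_n\big((t-x)^4;x\big)=O(n^{-1})\to0$. Letting $n\to\infty$ then $\varepsilon\to0$ yields $n\,{ U}_n^{1}\big(r(t,x)(t-x)^2;x\big)\to0$, which proves \eqref{e5} at the point $x$. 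Finally, if $f''\in C[0,1]$, then $r(t,x)$ can be taken jointly continuous with $r(x,x)=0$ and $\|r(\cdot,x)\|_\infty$ bounded uniformly in $x$; Remark \ref{r1} gives $\sup_x { U}_n^{1}(r^2(\cdot,x);x)\to0$ (applied to the positive pieces), so all the estimates above are uniform in $x$, and \eqref{e5} holds uniformly on $[0,1]$.
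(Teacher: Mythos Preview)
Your proposal is correct, and the Taylor-expansion skeleton together with the use of Lemma~\ref{l2} for $\mu_{n,1}^{1}$ and $\mu_{n,2}^{1}$ matches the paper. Where you diverge is in the treatment of the remainder $n\,U_n^{1}\!\big(r(t,x)(t-x)^2;x\big)$. The paper simply bounds $|\alpha(x,n)|,|\alpha(1-x,n)|\le C$ and then splits the outer sum according to whether the index $k$ satisfies $|k/n-x|<\delta$ or $|k/n-x|\ge\delta$; on the far set it uses the crude inequality $|\xi(t,x)|(t-x)^2\le M\le M\delta^{-4}(k/n-x)^4$ together with the Bernstein-type fourth moment $\sum_k p_{n-1,k}(x)(k/n-x)^4=O(n^{-2})$. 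You instead exploit \eqref{e2} to decompose $U_n^{1}=U_n+(\alpha_1(n)+1)R_n$, observe that $R_n$ is a bounded difference of two positive unital operators $L_{n}^{\pm}$, and control each positive piece by the standard moment inequality $L_n(|r|(t-x)^2)\le\varepsilon\,L_n((t-x)^2)+\|r\|_\infty\delta^{-2}L_n((t-x)^4)$. Your route is the more conventional one for Durrmeyer-type operators, since each summand already integrates over all of $[0,1]$ and a split in the integration variable $t$ (encoded in the second and fourth central moments) is more natural than a split in the summation index $k$; the paper's $k$-splitting is inherited from the pure Bernstein argument of \cite{1}. Both approaches rely on the same order of moment computations and reach the same conclusion. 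One small remark: what you call ``Cauchy--Schwarz'' in your display is in fact just the direct $\varepsilon$--$\delta$ moment split, not the Cauchy--Schwarz step used in Theorem~\ref{t2}.
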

\begin{proof}
	Applying the modified genuine Bernstein-Durrmeyer operators ${ U}_n^{1}$ to the Taylor's formula, we get
\begin{align*} {U}_n^{1}(f;x)-f(x)&={ U}_n^{1}(t-x;x)f^{\prime}(x)+\displaystyle\frac{1}{2}{ U}_n^{1}\left((t-x)^2;x\right)f^{\prime\prime}(x)\\
&+{ U}_n^{1}\left(\xi(t,x)(t-x)^2;x\right),  \end{align*}	
	where $\xi\in C[0,1]$ and $\displaystyle\lim_{t\to x}\xi(t,x)=0$.
	It is sufficient to show that
	\begin{align}
	\displaystyle\lim_{n\to\infty} n{ U}_{n}^{1}(\xi(t,x)(t-x)^2;x)&=0.\label{eG2}
	\end{align}
	Since the operators $ { U}_n^{1}$ are not positive linear operators  we can not use Cauchy-Schwarz inequality and we will introduce new techniques in order to prove the theorem.
	
	Let $\varepsilon>0$ be given. There exist a $\delta>0$ such that if $|t-x|<\delta$ then $|\xi(t,x)|<\varepsilon$. We denote
	\begin{align*}
	&
	{ K}_1=\left\{i:\left|\frac{i}{n}-x\right|<\delta, i=0,1,2,\cdots ,n\right\},\\
	& { K}_2=\left\{i:\left|\frac{i}{n}-x\right|\ge \delta, i=0,1,2,\cdots ,n\right\}.\end{align*}
	The boundedness of the sequences $\alpha_i(n)$, $i=0,1$ implies that these is a constant $C>0$ such that $|\alpha(x,n)|<C$. 
	
	Let $k\in { K}_1$. Hence $|\xi(t,x)|<\varepsilon$. Therefore we get	
	\begin{align}
	&\left|{ U}_n^{1}(\xi(t,x)(t-x)^2;x)\right|\leq C(1-x)^{n-1}|\xi(0,x)|x^2+Cx^{n-1}|\xi(1,x)|(1-x)^2\nonumber\\
	&+ C(n-1)\displaystyle\sum_{k=1}^{n-1}\left\{p_{n-1,k}(x)+p_{n-1,k-1}(x)
	\right\}\int_0^1p_{n-2,k-1}(t)|\xi(t,x)|(t-x)^2dt\nonumber\\
		&\leq C\varepsilon \left\{(n-1)\displaystyle\sum_{k=1}^{n-1}\left[p_{n-1,k}(x)+p_{n-1,k-1}(x)\right]\int_0^1p_{n-2,k-1}(t)(t-x)^2 dt\right.\nonumber\\
	&\left.+(1-x)^{n-1}x^2+x^{n-1}(1-x)^2\right\}=\displaystyle\frac{2\varepsilon C}{n(n+1)}\left\{2nx(1-x)+(1-2x)^2\right\}.
	\label{e6}
	\end{align}
	
	Let $k\in { K}_2$. We denote $M=\displaystyle\sup_{0\leq t\leq 1}|\xi(t,x)|(t-x)^2$. Then $$|\xi(t,x)|(t-x)^2\leq\displaystyle \frac{M}{\delta^4}\left(\frac{k}{n}-x\right)^4$$
	Moreover, the below upper bound is obtained 
	\begin{align}\label{e71}
	&\left|{ U}_n^{1}(\xi(t,x)(t-x)^2;x)\right|\\
	&\leq \displaystyle\frac{MC}{\delta^4}\left\{\displaystyle\sum_{k=1}^{n-1}\left[p_{n-1,k}(x)+p_{n-1,k-1}(x)
	\right]\left(\displaystyle\frac{k}{n}-x\right)^4+(1-x)^{n-1}x^4+x^{n-1}(1-x)^4\right\}\nonumber\\
	&=\displaystyle\frac{CM}{n^4\delta^4}\left\{6x^2(1\!-x)^2n^2+4x(1\!-x)(13x^2\!-13x\!+3)n\!+(1\!-2x)^2(12x^2-12x+1)\right\}.\nonumber
	\end{align}
	Using (\ref{e6}) and (\ref{e71}), it follows
	\begin{align*}
	&\left|{ U}_n^{1}(\xi(t,x)(t-x)^2;x)\right|\leq\displaystyle\frac{2\varepsilon C}{n(n+1)}\left\{2nx(1-x)+(1-2x)^2\right\}\nonumber\\
	&+\displaystyle\frac{CM}{n^4\delta^4}\left\{6x^2(1\!-x)^2n^2\!+4x(1\!-x)(13x^2\!-13x+3)n\!+(1\!-2x)^2(12x^2\!-12x+1)\right\}.
	\end{align*}
	Therefore, the last inequality leads to (\ref{eG2}) and the theorem is proved.
\end{proof}

\section{Better rate of convergence}

In the following we will improve the previous results considering a new genuine Bernstein-Durrmeyer operators that have order of approximation $\displaystyle{ O}\left(n^{-2}\right)$ defined as

\begin{align}\label{U2}
{ U}_n^{2}(f;x)&=(n-1)\displaystyle\sum_{k=1}^{n-1} p_{n,k}^{2}(x)\int_0^1p_{n-2,k-1}(t)f(t)dt\nonumber\\
&+\beta(x,n)(1-x)^{n-2}f(0)+\beta(1-x,n)x^{n-2}f(1).
\end{align}
where
\begin{equation}
\label{e38p}
p_{n,k}^{2}(x)=\beta(x,n)p_{n-2,k}(x)+\gamma(x,n)p_{n-2,k-1}(x)+\beta(1-x,n)p_{n-2,k-2}(x) \end{equation}
and
$$ \beta(x,n)=\beta_2(n)x^2+\beta_1(n)x+\beta_0(n),\,\,\gamma(x,n)=\gamma_0(n)x(1-x),  $$
where $\beta_i(n),i=0,1,2$ and $\gamma_0(n)$ are  unknown sequences. For $\beta_2(n)=\beta_0(n)=1,$ $\beta_1(n)=-2$, $\gamma_0(n)=2$ we obtained the classical genuine-Bernstein-Durrmeyer operators.

In the following we suppose ${ U}_n^{2}(e_0;x)=1$, namely
$$ \left(2\beta_2(n)-\gamma_0(n)\right)x^2-\left(2\beta_2(n)-\gamma_0(n)\right)x+2\beta_0(n)+\beta_1(n)+\beta_2(n)=1. $$ This yields
$$
2\beta_2(n)-\gamma_0(n)=0,\,\,
2\beta_0(n)+\beta_1(n)+\beta_2(n)=1.
$$
The above relations lead to
\begin{align*}
{ U}_n^{2}(e_1;x)&=x+\displaystyle\frac{2}{n}\left[2\left(\beta_0(n)-1\right)x-\beta_0(n)+1\right];\\
{ U}_n^{2}(e_2;x)&=x^2+\displaystyle\frac{2}{n(n+1)}\left[4\beta_0(n)nx^2-2\beta_0(n)nx-8\beta_0(n)x^2+\beta_2(n)x^2\right.\\
&-5nx^2+10\beta_0(n)x
-\left. \beta_2(n)x+3nx+7x^2-3\beta_0(n)-9x+3 \right].
\end{align*}
In order to have $\displaystyle\lim_{n\to\infty}{ U}_n^{2}(e_i;x)=x^i$, $i=0,1,2$ we suppose the sequences $\beta_0(n)$ and $\beta_2(n)$ to verify the conditions
$$ \displaystyle\lim_{n\to\infty}\frac{\beta_0(n)}{n}=0\textrm{ and } \displaystyle\lim_{n\to\infty}\frac{\beta_2(n)}{n^2}=0.  $$
We consider  the case $\beta_0(n)=1$ and $\beta_2(n)=n$. Thus  $\beta_1(n)=-n-1$ and $\gamma_0(n)=2n$.
For this particular case the modified genuine Bernstein-Durrmeyer operator becomes
\begin{align}\label{TK2}
\tilde{ U}_n^{2}(f;x)&=(n-1)\displaystyle\sum_{k=1}^{n-1} \tilde{p}_{n,k}^{2}(x)\int_0^1p_{n-2,k-1}(t)f(t)dt\\
&+\left[nx^2-(n+1)x+1\right](1-x)^{n-2}f(0)+\left[nx^2-(n-1)x\right]x^{n-2}f(1),\nonumber
\end{align}
where
\begin{align*}
\tilde{p}_{n,k}^{2}(x)&=\left[nx^2-(n+1)x+1\right]p_{n-2,k}(x)+2nx(1-x)p_{n-2,k-1}(x)\\
&+\left[nx^2-(n-1)x\right]p_{n-2,k-2}(x). \end{align*}

Denote $\tilde{m}_{n,k}^{2}(x):=\tilde{ U}_n^{2}(e_k;x)$, $\tilde{\mu}_{n,k}^{2}(x):=\tilde{ U}_n^{2}\left((t-x)^k;x\right)$, $k=0,1,\dots.$ the moments and central moments, respectively for  the modified genuine Bernstein-Durrmeyer operator $\tilde{ U}_n^{2}$.

\begin{lemma}
	The following statements hold:
	\begin{itemize}
		\item[i)] $\tilde{m}_{n,0}^{2}(x)=1$;
		\item[ii)] $\tilde{m}_{n,1}^{2}(x)=x$;
		\item[iii)] $\tilde{m}_{n,2}^{2}(x)=x^2+\displaystyle\frac{2x(1-x)}{n(n+1)}$.
	\end{itemize}
\end{lemma}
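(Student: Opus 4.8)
The plan is to obtain all three moments by specialising the formulas already established for the general operator ${U}_n^{2}$ to the choice $\beta_0(n)=1$, $\beta_2(n)=n$ (equivalently $\beta_1(n)=-n-1$, $\gamma_0(n)=2n$), rather than recomputing from scratch. For $\tilde m_{n,0}^{2}$ there is nothing to do: the identity $\tilde U_n^{2}(e_0;x)=1$ is precisely the pair of normalisation constraints $2\beta_2(n)-\gamma_0(n)=0$ and $2\beta_0(n)+\beta_1(n)+\beta_2(n)=1$ imposed in the construction of ${U}_n^{2}$, and one checks at once that $\beta_0(n)=1$, $\beta_2(n)=n$ satisfy both.

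For $\tilde m_{n,1}^{2}$, I would substitute $\beta_0(n)=1$ into the displayed expression ${U}_n^{2}(e_1;x)=x+\frac{2}{n}\bigl[2(\beta_0(n)-1)x-\beta_0(n)+1\bigr]$; the bracket vanishes identically, giving $\tilde m_{n,1}^{2}(x)=x$. For $\tilde m_{n,2}^{2}$, substitute $\beta_0(n)=1$ and $\beta_2(n)=n$ into the displayed expression for ${U}_n^{2}(e_2;x)$ and collect the terms inside the bracket: the coefficients carrying a factor $n$ cancel ($4nx^{2}+nx^{2}-5nx^{2}=0$ and $-2nx-nx+3nx=0$), the constant terms cancel ($-3+3=0$), and the surviving part is $-x^{2}+x$. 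Hence $\tilde m_{n,2}^{2}(x)=x^{2}+\frac{2}{n(n+1)}(x-x^{2})=x^{2}+\frac{2x(1-x)}{n(n+1)}$, as claimed.

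If instead a derivation directly from \eqref{TK2} is preferred, one evaluates the integrals by the Beta-function identity $\int_0^1 p_{n-2,k-1}(t)\,t^{j}\,dt=\binom{n-2}{k-1}\frac{(k-1+j)!\,(n-k-1)!}{(n-1+j)!}$, which rewrites each summand as a Bernstein basis polynomial $p_{n-2,k}(x)$, $p_{n-2,k-1}(x)$ or $p_{n-2,k-2}(x)$ multiplied by a rational function of $k$ and $n$; after shifting the summation index in each of the three families so that all read $\sum_{k}p_{n-2,k}(x)\,\phi(k)$ with $\phi$ of degree $\le 2$, one applies the elementary identities $\sum_k p_{m,k}(x)=1$, $\sum_k k\,p_{m,k}(x)=mx$, $\sum_k k(k-1)p_{m,k}(x)=m(m-1)x^{2}$ and finally adds the two endpoint contributions $\bigl[nx^{2}-(n+1)x+1\bigr](1-x)^{n-2}$ and $\bigl[nx^{2}-(n-1)x\bigr]x^{n-2}$. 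The computation is routine; the only point requiring care is the bookkeeping of the three index-shifted sums together with the two boundary terms, and verifying that all the $O(n)$ and $O(1)$ contributions to $\tilde m_{n,2}^{2}(x)-x^{2}$ cancel, leaving only the $O(n^{-2})$ remainder — which is exactly the gain that motivates introducing $\tilde U_n^{2}$.
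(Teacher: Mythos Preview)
Your proposal is correct and is precisely the intended route: the paper does not give a separate proof of this lemma, and the three identities follow immediately by substituting $\beta_0(n)=1$, $\beta_2(n)=n$ into the displayed formulas for ${U}_n^{2}(e_i;x)$, $i=0,1,2$. Your term-by-term cancellation for $\tilde m_{n,2}^{2}$ is accurate, and the alternative direct computation you sketch is unnecessary here.
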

\begin{lemma}
	The following statements hold:
	\begin{itemize}
		\item [i)] $\tilde{\mu}_{n,2}^{2}(x)=\displaystyle\frac{2x(1-x)}{n(n+1)}$,
		\item[ii)] $\tilde{\mu}_{n,3}^{2}(x)=-\displaystyle\frac{6x(1-x)(1-2x)(n-2)}{n(n+1)(n+2)}$,
		\item[iii)] $\tilde{\mu}_{n,4}^{2}(x)=-\displaystyle\frac{12x^2(1-x)^2n}{(n+1)(n+2)(n+3)}+{ O}\left(n^{-3}\right)$,
		\item[iv)] $\tilde{\mu}_{n,5}^{2}(x)=\displaystyle\frac{240x^2(1-x)^2(2x-1)n}{(n+1)(n+2)(n+3)(n+4)}+{ O}\left(n^{-4}\right)$,
		\item[v)] $\tilde{\mu}_{n,6}^{2}(x)=-\displaystyle\frac{240x^3(1-x)^3n^2}{(n+1)(n+2)(n+3)(n+4)(n+5)}+{ O}\left(n^{-4}\right)$.
	\end{itemize}
\end{lemma}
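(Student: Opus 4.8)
The plan is to connect $\tilde{U}_n^{2}$ with the classical genuine Bernstein--Durrmeyer operator $U_n$ by a differential identity, and then read off the central moments from those of $U_n$. Starting from $p_{n,k}''(x)=n(n-1)\big(p_{n-2,k-2}(x)-2p_{n-2,k-1}(x)+p_{n-2,k}(x)\big)$ (with $p_{n-2,j}\equiv 0$ for $j\notin\{0,\dots,n-2\}$) and the quadratic recursion $p_{n,k}(x)=(1-x)^2p_{n-2,k}(x)+2x(1-x)p_{n-2,k-1}(x)+x^2p_{n-2,k-2}(x)$, and writing $\beta(x,n)=(1-x)(1-nx)=(1-x)^2-(n-1)x(1-x)$, $2nx(1-x)=2x(1-x)+2(n-1)x(1-x)$, and likewise for the two endpoint coefficients in \eqref{TK2}, one checks at once that
\[
\tilde{p}_{n,k}^{2}(x)=p_{n,k}(x)-\frac{x(1-x)}{n}\,p_{n,k}''(x),\qquad k=0,1,\dots,n.
\]
Since the coefficient functionals $c_0(f)=f(0)$, $c_n(f)=f(1)$, $c_k(f)=(n-1)\int_0^1 p_{n-2,k-1}(t)f(t)\,dt$ are common to $U_n$ and $\tilde{U}_n^{2}$, summing this against $c_k(f)$ gives
\[
\tilde{U}_n^{2}(f;x)=U_n(f;x)-\frac{x(1-x)}{n}\,\frac{d^2}{dx^2}U_n(f;x).
\]

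Next I would apply this identity to $f=(t-x)^r$. Writing $m_{n,j}^{U}=U_n(e_j;\cdot)$ and $\mu_{n,j}^{U}(x)=U_n((t-x)^j;x)$ for the (well known) moments and central moments of $U_n$, and keeping track that $x$ plays the roles of both the parameter and the evaluation point, the identity yields
\[
\tilde{\mu}_{n,r}^{2}(x)=\mu_{n,r}^{U}(x)-\frac{x(1-x)}{n}\Big[\big(\mu_{n,r}^{U}\big)''(x)+2r\big(\mu_{n,r-1}^{U}\big)'(x)+r(r-1)\,\mu_{n,r-2}^{U}(x)\Big],
\]
where the bracket equals $\partial_y^2 U_n\big((\cdot-x)^r;y\big)\big|_{y=x}$, obtained by differentiating $\mu_{n,r}^{U}(x)=U_n((\cdot-x)^r;x)$ twice and using $\partial_x U_n((\cdot-x)^r;y)=-r\,U_n((\cdot-x)^{r-1};y)$. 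It then remains to insert the central moments of $U_n$, which one can quote or recompute from $m_{n,j}^{U}(x)=\big(n(n+1)\cdots(n+j-1)\big)^{-1}\sum_{k=0}^{n}k(k+1)\cdots(k+j-1)\,p_{n,k}(x)$ by expanding the rising factorial into falling factorials (Lah numbers) and using $\sum_{k}k(k-1)\cdots(k-i+1)\,p_{n,k}(x)=n(n-1)\cdots(n-i+1)\,x^i$; e.g.\ $\mu_{n,2}^{U}=\frac{2x(1-x)}{n+1}$ and $\mu_{n,3}^{U}=\frac{6x(1-x)(1-2x)}{(n+1)(n+2)}$, which already yield items (i) and (ii). For $r=4,5,6$ one substitutes $\mu_{n,4}^{U},\mu_{n,5}^{U},\mu_{n,6}^{U}$ in the same way, keeps only the term with the highest power of $n$, and collects the rest into $O(n^{-3})$, $O(n^{-4})$, $O(n^{-4})$, respectively.

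The one genuinely laborious step is this last substitution: assembling $\mu_{n,r}^{U}$ for $r\le 6$, applying the two derivatives, and watching the alternating combination collapse to the announced leading coefficients $-12,\ 240,\ -240$, which survive only after heavy cancellation (for $r=4$, for instance, $\mu_{n,4}^{U}$ contributes $\approx 12[x(1-x)]^2/n^2$ and the correction term $\approx -24[x(1-x)]^2/n^2$). A symbolic computation is advisable, controlled by two checks: the reflection symmetry $x\mapsto 1-x$ of $\tilde{U}_n^{2}$ forces $\tilde{\mu}_{n,r}^{2}(1-x)=(-1)^r\tilde{\mu}_{n,r}^{2}(x)$, which explains the factor $(1-2x)$ in the odd moments and the symmetry of the even ones; and each $\tilde{\mu}_{n,r}^{2}$ must be $O(n^{-\lceil r/2\rceil})$, which is automatic from the displayed formula since multiplying by $x(1-x)/n$ lowers by one the order of the bracket, itself $O(n^{-\lceil (r-2)/2\rceil})$. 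Should one wish to avoid the differential identity altogether, the same results follow by computing $\tilde{m}_{n,j}^{2}(x)$ for $j\le 6$ from $\int_0^1 t^j p_{n-2,k-1}(t)\,dt=\frac{1}{n-1}\cdot\frac{k(k+1)\cdots(k+j-1)}{n(n+1)\cdots(n+j-1)}$ and the same Bernstein sums, then passing to central moments by $\tilde{\mu}_{n,r}^{2}=\sum_{j=0}^{r}\binom{r}{j}(-x)^{r-j}\tilde{m}_{n,j}^{2}$, at the price of heavier bookkeeping.
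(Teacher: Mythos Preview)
Your proposal is correct, and the route you take is genuinely different from the paper's. The paper states this lemma without proof, treating it as a routine (if tedious) computation of the moments $\tilde m_{n,j}^{2}$ directly from the definition~\eqref{TK2} and then passing to central moments --- essentially the alternative you sketch in your final paragraph. By contrast, your argument is structural: the key identity
\[
\tilde p_{n,k}^{2}(x)=p_{n,k}(x)-\frac{x(1-x)}{n}\,p_{n,k}''(x),
\]
and hence $\tilde U_n^{2}(f;x)=U_n(f;x)-\dfrac{x(1-x)}{n}\dfrac{d^2}{dx^2}U_n(f;x)$, explains at once \emph{why} the second central moment drops from order $n^{-1}$ to $n^{-2}$ and why the leading coefficients in items (iii)--(v) arise by a single subtraction from the known $\mu_{n,r}^{U}$. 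Your chain-rule formula for $\partial_y^2 U_n((\cdot-x)^r;y)\big|_{y=x}$ is correct (I checked it for $r=2,3$ and it reproduces (i) and (ii) exactly), and your order-of-magnitude and symmetry checks are good sanity controls. The trade-off is that the paper's direct computation requires no preliminary identity but is opaque and heavy, whereas your approach front-loads a short algebraic verification and then makes the moment calculations transparent and reusable (the same identity would immediately give the Voronovskaja-type behaviour of $\tilde U_n^{2}$).
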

\begin{theorem}
	If $f\in C^{6}[0,1]$ and $x\in [0,1]$, then 
	$$ \tilde{ U}_n^{2}(f;x)-f(x)={ O}\left(n^{-2}\right). $$
\end{theorem}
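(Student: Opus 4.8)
The plan is to expand $f$ by Taylor's formula about $x$, apply $\tilde{U}_n^{2}$ term by term, read off the contribution of the Taylor polynomial from the central moments already computed, and then bound the remainder directly; the last step is the delicate one because $\tilde{U}_n^{2}$ is \emph{not} a positive operator.

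Since $f\in C^{6}[0,1]$, Taylor's theorem with Lagrange remainder gives
\[
f(t)=\sum_{j=0}^{5}\frac{f^{(j)}(x)}{j!}(t-x)^{j}+\rho(t),\qquad \rho\in C[0,1],\quad |\rho(t)|\le\frac{\|f^{(6)}\|}{6!}\,(t-x)^{6}.
\]
Because $\tilde{U}_n^{2}$ is linear with $\tilde{U}_n^{2}(e_0;x)=1$,
\[
\tilde{U}_n^{2}(f;x)-f(x)=\sum_{j=1}^{5}\frac{f^{(j)}(x)}{j!}\,\tilde{\mu}_{n,j}^{2}(x)+\tilde{U}_n^{2}(\rho;x).
\]
Here $\tilde{\mu}_{n,1}^{2}(x)=\tilde{m}_{n,1}^{2}(x)-x=0$, and by the lemma computing the central moments $\tilde{\mu}_{n,2}^{2}(x),\tilde{\mu}_{n,3}^{2}(x),\tilde{\mu}_{n,4}^{2}(x)$ are $O(n^{-2})$ and $\tilde{\mu}_{n,5}^{2}(x)$ is $O(n^{-3})$, all uniformly in $x\in[0,1]$; hence the finite sum is $O(n^{-2})$.

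The remaining term $\tilde{U}_n^{2}(\rho;x)$ is where non-positivity matters: the Cauchy--Schwarz argument used for positive operators is unavailable, so I pass to absolute values on the weights and use $\bigl|\int_0^1 p_{n-2,k-1}(t)\rho(t)\,dt\bigr|\le\int_0^1 p_{n-2,k-1}(t)|\rho(t)|\,dt$ to get that $|\tilde{U}_n^{2}(\rho;x)|$ is at most $\frac{1}{6!}\|f^{(6)}\|$ times
\[
(n-1)\sum_{k=1}^{n-1}\bigl|\tilde{p}_{n,k}^{2}(x)\bigr|\int_0^1 p_{n-2,k-1}(t)(t-x)^{6}\,dt+|\beta(x,n)|(1-x)^{n-2}x^{6}+|\beta(1-x,n)|x^{n-2}(1-x)^{6}.
\]
The elementary bounds $|1-nx|\le n$ and $|n(x-1)+1|\le n$ on $[0,1]$ give $|nx^{2}-(n+1)x+1|=(1-x)|1-nx|\le n(1-x)$ and $|nx^{2}-(n-1)x|=x|n(x-1)+1|\le nx$, so
\[
\bigl|\tilde{p}_{n,k}^{2}(x)\bigr|\le n\bigl[(1-x)p_{n-2,k}(x)+2x(1-x)p_{n-2,k-1}(x)+x\,p_{n-2,k-2}(x)\bigr].
\]
Substituting, the double sum is bounded by $n$ times a positive combination of the three quantities $(n-1)\sum_{k}p_{n-2,k-i}(x)\int_0^1 p_{n-2,k-1}(t)(t-x)^{6}\,dt$, $i=0,1,2$, each of which is a sixth central moment of a Bernstein--Durrmeyer-type operator of degree $n-2$ and is therefore $O(n^{-3})$ uniformly in $x$ (for $i=1$ this is the classical sixth moment of the Bernstein--Durrmeyer operator; for $i=0,2$ there is a unit shift in one Bernstein factor, handled identically, or by a direct computation of the type behind the preceding lemmas). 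Hence the double sum is $O(n\cdot n^{-3})=O(n^{-2})$, while the two endpoint terms, being at most $n(1-x)^{n-1}x^{6}$ and $nx^{n-1}(1-x)^{6}$, are $O(n^{-5})$ uniformly in $x$. Thus $|\tilde{U}_n^{2}(\rho;x)|=O(n^{-2})$, and combined with the preceding paragraph, $\tilde{U}_n^{2}(f;x)-f(x)=O(n^{-2})$, in fact uniformly on $[0,1]$.

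The main obstacle is exactly the loss of positivity, which is what forces the power counting above: the extra factor $n$ produced by the choice $\beta_2(n)=n$, $\gamma_0(n)=2n$ must be absorbed by the $O(n^{-3})$ decay of a \emph{sixth} central moment, which is why $f\in C^{6}[0,1]$ --- making the Taylor remainder an honest multiple of $(t-x)^{6}$ --- is the natural hypothesis: with a lower-order remainder the same estimate would only yield $O(n^{-1})$. One could instead split the summation index into $\{\,|k/n-x|<\delta\,\}$ and its complement as in Theorem~\ref{t2.5}, but the fixed even power $(t-x)^{6}$ makes the present route shorter.
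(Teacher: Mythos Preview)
Your proof is correct and follows a genuinely different route from the paper's. The paper expands $f$ to \emph{sixth} order with a Peano remainder $\xi(t,x)(t-x)^6$, $\xi(t,x)\to 0$ as $t\to x$, and then handles $\tilde{U}_n^{2}(\xi(t,x)(t-x)^6;x)$ by exactly the $K_1/K_2$ splitting you allude to in your last paragraph (as in Theorem~\ref{t2.5}). You instead stop the Taylor polynomial at order five and invoke the Lagrange form, so that the remainder obeys the global pointwise bound $|\rho(t)|\le\frac{\|f^{(6)}\|}{6!}(t-x)^6$; this collapses the remainder estimate into a single sixth-moment computation with no $\varepsilon$--$\delta$ argument at all. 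Your coefficient bound $|\tilde{p}_{n,k}^{2}(x)|\le n\bigl[(1-x)p_{n-2,k}(x)+2x(1-x)p_{n-2,k-1}(x)+x\,p_{n-2,k-2}(x)\bigr]$ produces precisely the three shifted Durrmeyer-type sums that the paper writes out in (\ref{ec1}), so the $O(n^{-3})$ claim you cite for each of them is the same computation recorded there. The payoff of your route is brevity and a cleaner logical structure under the stated hypothesis $f\in C^{6}[0,1]$; the paper's splitting technique would become necessary only if one weakened the hypothesis to the pointwise existence of $f^{(6)}(x)$, where the Lagrange bound is unavailable.
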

\begin{proof} Applying the modified genuine Bernstein-Durrmeyer operators $\tilde{ U}_n^{2}$ to the Taylor's formula, we obtain
	$$ \tilde{ U}_n^{2}(f;x)-f(x)=\displaystyle\sum_{k=1}^6 f^{(k)}(x)\tilde{ U}_n^{2}\left( (t-x)^k;x\right)+\tilde{ U}_n^{2}\left(\xi(t,x)(t-x)^6;x\right),  $$
	where $\displaystyle\lim_{t\to x}\xi(t,x)=0$.	
	We have
	\begin{align}
&	\tilde{ U}_n^{2}(f;x)=(n-1)\left(nx^2-(n+1)x+1\right)\sum_{k=1}^{n-2}p_{n-2,k}(x)\int_0^1p_{n-2,k-1}(t)f(t)dt\nonumber\\
	&+2n(n-1)x(1-x)\sum_{k=1}^{n-1}p_{n-2,k-1}(x)\int_0^1p_{n-2,k-1}(t)f(t)dt\nonumber\\
	&+(n-1)\left(nx^2-(n-1)x\right)\sum_{k=2}^{n-1}p_{n-2,k-2}(x)\int_0^1p_{n-2,k-1}(t)f(t)dt\nonumber\\
	&+\left(nx^2-(n+1)x+1\right)(1-x)^{n-2}f(0)+\left(nx^2-(n-1)x\right)x^{n-2}f(1).
	\label{xx}
	\end{align}	
	Let $\varepsilon>0$ be given. There exist  $\delta>0$ such that if $|t-x|<\delta$, then $|\xi(t,x)|<\varepsilon$. We denote
	\begin{align*}
	&{ K}_1=\left\{i:\left|\frac{i}{n}-x\right|<\delta, i=0,1,2,\cdots ,n\right\},\\
	&{ K}_2=\left\{i:\left|\frac{i}{n}-x\right|\ge \delta, i=0,1,2,\cdots ,n\right\}.\end{align*}
	Let $k\in { K}_1$. Since $\left| \xi(t,x)\right| < \varepsilon $,  from (\ref{xx}) we obtain
	\begin{align}
&	\left| \tilde{U}_n^{2}\left(\xi(t,x)(t-x)^6;x\right) \right|
		\leq\displaystyle\frac{\varepsilon n}{4}\left\{(n-1)\sum_{k=1}^{n-2}p_{n-2,k}(x)\int_0^1p_{n-2,k-1}(t)(t-x)^6 dt\right.\nonumber\\
	&+2(n-1)\sum_{k=1}^{n-1}p_{n-2,k-1}(x)\int_0^1p_{n-2,k-1}(t)(t-x)^6 dt\nonumber
	\end{align}\begin{align}
	&+(n-1)\sum_{k=2}^{n-1}p_{n-2,k-2}(x)\int_0^1p_{n-2,k-1}(t)(t-x)^6 dt
	+\left.(1-x)^{n-2}x^6+x^{n-2}(1-x)^6\right\}\nonumber
\\
	&\leq\displaystyle\frac{120\varepsilon x^3(1-x)^3(1+x-x^2)n^3}{(n+1)(n+2)(n+3)(n+4)(n+5)}+{ O}\left(n^{-3}\right).	\label{ec1}
	\end{align}
	Let $k\in { K}_2$. We denote $M=\displaystyle\sup_{0\leq t\leq 1} |\xi(t,x)|(t-x)^6$. Then
	 $$\left| \xi(t,x) \right|(t-x)^6\leq\displaystyle \frac{M}{\delta^6}\left(\frac{k}{n}-x \right)^6.$$
	 Moreover, the below upper bound is obtained
	\begin{align}\label{ec2}
&	\left| \tilde{ U}_n^{2}\left(\xi(t,x)(t-x)^6;x\right) \right|
		=\displaystyle\frac{Mn}{4\delta^6}\left\{\sum_{k=1}^{n-2}p_{n-2,k}(x)\left(\frac{k}{n}-x\right)^6
\right.\\
	&	+2\sum_{k=1}^{n-1}p_{n-2,k-1}(x)\left(\frac{k}{n}-x\right)^6+\sum_{k=2}^{n-1}p_{n-2,k-2}(x)\left(\frac{k}{n}-x\right)^6\nonumber\\
&	+\left.(1-x)^{n-2}x^6+x^{n-2}(1-x)^6\right\}
	=\displaystyle\frac{15Mx^3(1-x)^3}{n^2\delta^6}+{ O}\left(n^{-3}\right)={ O}\left(n^{-2} \right).\nonumber
	\end{align}
	From (\ref{ec1}) and (\ref{ec2}) the proof of theorem is completed.
\end{proof}

In the following we will improve the previous results considering a new\linebreak  genuine Bernstein-Durrmeyer operators that have order of approximation $\displaystyle{ O}\left(n^{-3}\right)$ defined as

\begin{align}\label{K3}
{ U}_{n}^{3}(f;x)&=(n-1)\displaystyle\sum_{k=1}^{n-1} p_{n,k}^{3}(x) \int_{0}^{1}p_{n-2,k-1}(t)f(t)dt\nonumber\\
&+\beta(x,n)(1-x)^{n-4}f(0)+\beta(1-x,n)x^{n-4}f(1),
\end{align}
where
\begin{align*} p_{n,k}^{3}(x)&=\beta(x,n)p_{n-4,k}(x)+\gamma(x,n)p_{n-4,k-1}(x)+\delta(x,n)p_{n-4,k-2}(x) \\
&+\gamma(1-x,n)p_{n-4,k-3}(x)+\beta(1-x,n)p_{n-4,k-4}(x)\end{align*}
and
\begin{align*}
& \beta(x,n)=\beta_4(n)x^4+\beta_3(n)x^3+\beta_2(n)x^2+\beta_1(n)x+\beta_0(n),\\
& \gamma(x,n)=\gamma_4(n)x^4+\gamma_3(n)x^3+\gamma_2(n)x^2+\gamma_1(n)x+\gamma_0(n),\\
&\delta(x,n)=\delta_0(n)(x(1-x))^2.
\end{align*}
Note that $\beta_i(n)$, $\gamma_i(n)$, $i=0,1,\dots,4$ and $\delta_0(n)$ are some unknown sequences. Denote $\tilde{ U}_n^{3}$  the operator (\ref{K3}) with 
\begin{align*}
& \beta_0(n)=1,\beta_1(n)=\displaystyle -4-\frac{4}{3}n, \beta_2(n)=\displaystyle 5+\frac{10}{3}n+\frac{1}{2}n^2, \beta_3(n)=-n^2-2n-2,\\
&\beta_4(n)=\displaystyle\frac{1}{2}n^2,\gamma_0(n)=0,
 \gamma_1(n)=\displaystyle 4+\frac{7}{3}n,\,\,\gamma_2(n)=\displaystyle -\frac{19}{3}n-2n^2-8,\\
 & \gamma_3(n)=4n^2+4n+4,
 \gamma_4(n)=-2n^2,\,\, \delta_0(n)=3n^2.
\end{align*}
Let $\tilde{\mu}_{n,k}^{3}(x):=\tilde{ U}_n^{3}\left((t-x)^k;x\right)$, $k=0,1,\dots$ the central moments of $\tilde{ U}_n^{3}$.
\begin{lemma}
	The modified genuine Bernstein-Durrmeyer operator $\tilde{ U}_n^{3}$ verify:
	\begin{itemize}
		\item [i)] $\tilde{\mu}_{n,1}^{3}(x)=\tilde{\mu}_{n,2}^{3}(x)=\tilde{\mu}_{n,3}^{3}(x)=0$;
		\item [ii)] $\tilde{\mu}_{n,4}^{3}(x)=\displaystyle \frac{4x(1-x)(39x^2-39x+10)}{\sum_{k=1}^3(n+k)}+{ O}\left(n^{-4}\right) $;
		\item [iii)] $\tilde{\mu}_{n,5}^{3}(x)=\displaystyle \frac{120(1-2x)x^2(1-x)^2n}{\sum_{k=1}^4(n+k)}+{ O}\left(n^{-4}\right)$;
		\item [iv)] $\tilde{\mu}_{n,6}^{3}(x)=\displaystyle\frac{120x^3(1-x)^3n^2}{\sum_{k=1}^{5}(n+k)}+{ O}\left(n^{-4}\right);  $
		\item [v)] $\tilde{\mu}_{n,7}^{3}(x)=\tilde{\mu}_{n,8}^{3}(x)=\displaystyle{ O} \left(n^{-4}\right); $
		\item[vi)] $\tilde{\mu}_{n,9}^{3}(x)=\tilde{\mu}_{n,10}^{3}(x)=\displaystyle{ O} \left(n^{-5}\right).$
	\end{itemize}
\end{lemma}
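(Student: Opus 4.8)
The plan is to reduce the entire statement to one explicit evaluation of the moments $\tilde U_n^{3}(e_j;x)$ for $j=0,1,\dots,10$, exactly as in the moment lemmas for $\tilde U_n^{2}$. The basic identity is the Beta integral that already governs the classical genuine Bernstein--Durrmeyer operator: for $1\le k\le n-1$ and $j\ge 0$,
\[
(n-1)\int_0^1 p_{n-2,k-1}(t)\,t^{j}\,dt=\frac{k(k+1)\cdots(k+j-1)}{n(n+1)\cdots(n+j-1)},
\]
a quotient of rising factorials. Using this together with the classical factorial-moment formula $\sum_{m=0}^{N}p_{N,m}(x)\,m(m-1)\cdots(m-i+1)=N(N-1)\cdots(N-i+1)\,x^{i}$ with $N=n-4$, each of the five sums produced by $p_{n,k}^{3}(x)=\beta(x,n)p_{n-4,k}(x)+\gamma(x,n)p_{n-4,k-1}(x)+\delta(x,n)p_{n-4,k-2}(x)+\gamma(1-x,n)p_{n-4,k-3}(x)+\beta(1-x,n)p_{n-4,k-4}(x)$ becomes, after the index shift $m=k-\ell$ turns $k(k+1)\cdots(k+j-1)$ into a polynomial in $m$, an explicit polynomial in $x$ of degree at most $j$ divided by $n(n+1)\cdots(n+j-1)$.

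A point requiring care is the interaction between the truncated range $1\le k\le n-1$, the shifted Bernstein bases, and the endpoint terms $\beta(x,n)(1-x)^{n-4}f(0)$, $\beta(1-x,n)x^{n-4}f(1)$. Completing the sums to the full ranges of the shifted bases generates boundary corrections, namely $-(1-x)^{n-4}$ (present only when $j=0$, from the $\ell=0$ piece) and $-\beta(1-x,n)x^{n-4}\,n(n+1)\cdots(n+j-1)$ (from the $\ell=4$ piece); after division by $n(n+1)\cdots(n+j-1)$ these exactly cancel the two endpoint terms. Thus $\tilde U_n^{3}(e_j;x)$ is a genuine polynomial in $x$ with no exponentially small remainder, of the form
\[
\tilde U_n^{3}(e_j;x)=\frac{\beta(x,n)\,\Sigma_0^{(j)}+\gamma(x,n)\,\Sigma_1^{(j)}+\delta(x,n)\,\Sigma_2^{(j)}+\gamma(1-x,n)\,\Sigma_3^{(j)}+\beta(1-x,n)\,\Sigma_4^{(j)}}{n(n+1)\cdots(n+j-1)},
\]
where $\Sigma_\ell^{(j)}=\sum_{m=0}^{n-4}p_{n-4,m}(x)\,(m+\ell)(m+\ell+1)\cdots(m+\ell+j-1)$ is the explicit degree-$\le j$ polynomial described above.

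Substituting the prescribed sequences $\beta_i(n),\gamma_i(n),\delta_0(n)$ into this formula, the simplification collapses for $j\le 3$: one obtains $\tilde U_n^{3}(e_j;x)=x^{j}$ for $j=0,1,2,3$, identically in $x$ and $n$ --- this is precisely what the chosen coefficients were designed to achieve. Since $\tilde\mu_{n,1}^{3}=\tilde U_n^{3}(e_1;\cdot)-x$, $\tilde\mu_{n,2}^{3}=\tilde U_n^{3}(e_2;\cdot)-2x\,\tilde U_n^{3}(e_1;\cdot)+x^{2}$ and $\tilde\mu_{n,3}^{3}=\tilde U_n^{3}(e_3;\cdot)-3x\,\tilde U_n^{3}(e_2;\cdot)+3x^{2}\,\tilde U_n^{3}(e_1;\cdot)-x^{3}$, item i) follows immediately. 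For items ii)--vi) I would compute $\tilde U_n^{3}(e_j;x)$ for $j=4,\dots,10$ from the same closed form, retain the exact rational functions of $n$, and assemble
\[
\tilde\mu_{n,k}^{3}(x)=\sum_{j=0}^{k}\binom{k}{j}(-x)^{k-j}\,\tilde U_n^{3}(e_j;x),\qquad k=4,\dots,10;
\]
expanding in powers of $1/n$ then yields the stated leading terms for $k=4,5,6$, while for $k=7,8$ the leading coefficients cancel, leaving $O(n^{-4})$, and for $k=9,10$ one further order cancels, leaving $O(n^{-5})$.

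The difficulty here is not conceptual but computational: one must keep enough lower-order-in-$n$ terms in every $\tilde U_n^{3}(e_j;x)$ for the large cancellations in the alternating binomial sums to be captured correctly --- in particular the orders asserted in v) and vi) only appear after several successive cancellations --- and one must track the shifted summation indices and the $f(0),f(1)$ contributions carefully so that no boundary term is lost. In practice these simplifications are best carried out with a computer algebra system, recording only the resulting closed forms.
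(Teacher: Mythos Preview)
The paper does not supply a proof of this lemma; like the analogous moment lemmas for $U_n^{1}$ and $\tilde U_n^{2}$, it is stated as a computational fact, almost certainly verified with the Maple routines the authors mention in Section~4. Your outline is the natural way to carry out that computation: reduce the integral via the Beta function to the rising factorial $k(k+1)\cdots(k+j-1)/[n(n+1)\cdots(n+j-1)]$, shift indices in each of the five pieces of $p_{n,k}^{3}$, and apply the factorial-moment identity for $p_{n-4,m}$. One small slip in your boundary bookkeeping: the $\ell=0$ correction at $j=0$ also carries the factor $\beta(x,n)$, not just $(1-x)^{n-4}$, so that it matches and cancels the endpoint term $\beta(x,n)(1-x)^{n-4}f(0)$; the mechanism you describe is otherwise correct. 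Beyond that, the plan is sound, and the only remaining work is the (substantial) symbolic bookkeeping, for which a CAS is indeed advisable.
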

The asymptotic order of approximation of $\tilde{ U}_n^{3}$ to $f$ when $n$ goes to infinity is given in the following result:

\begin{theorem}
	If $f\in C^{10}[0,1]$ and $x\in [0,1]$, then 
	$ \tilde{ U}_n^{3}(f;x)-f(x)={ O}\left(n^{-3}\right). $
\end{theorem}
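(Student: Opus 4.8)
The plan is to follow the scheme of the preceding proof for $\tilde{ U}_n^{2}$, now with a Taylor polynomial of degree ten. Since $f\in C^{10}[0,1]$ we may write
\begin{equation*}
f(t)=\sum_{k=0}^{10}\frac{f^{(k)}(x)}{k!}(t-x)^{k}+\xi(t,x)(t-x)^{10},
\end{equation*}
where $\xi(\cdot,x)\in C[0,1]$ is bounded, $\xi(x,x)=0$ and $\displaystyle\lim_{t\to x}\xi(t,x)=0$. Applying $\tilde{ U}_n^{3}$ and using the normalization $\tilde{ U}_n^{3}(e_0;x)=1$, we obtain
\begin{equation*}
\tilde{ U}_n^{3}(f;x)-f(x)=\sum_{k=1}^{10}\frac{f^{(k)}(x)}{k!}\,\tilde{\mu}_{n,k}^{3}(x)+\tilde{ U}_n^{3}\bigl(\xi(t,x)(t-x)^{10};x\bigr).
\end{equation*}
By the preceding Lemma, $\tilde{\mu}_{n,1}^{3}=\tilde{\mu}_{n,2}^{3}=\tilde{\mu}_{n,3}^{3}=0$, while $\tilde{\mu}_{n,k}^{3}(x)={ O}(n^{-3})$ for $k\in\{4,5,6\}$, $\tilde{\mu}_{n,k}^{3}(x)={ O}(n^{-4})$ for $k\in\{7,8\}$, and $\tilde{\mu}_{n,k}^{3}(x)={ O}(n^{-5})$ for $k\in\{9,10\}$; since $f^{(k)}$ is bounded on $[0,1]$, the finite sum is already ${ O}(n^{-3})$ on $[0,1]$. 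Everything therefore reduces to proving
\begin{equation}\label{eG3}
\tilde{ U}_n^{3}\bigl(\xi(t,x)(t-x)^{10};x\bigr)={ O}(n^{-3}).
\end{equation}

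For (\ref{eG3}) I would re-use the device of the proof of Theorem~\ref{t2.5}. Fix $\varepsilon>0$, choose $\delta>0$ with $|\xi(t,x)|<\varepsilon$ whenever $|t-x|<\delta$, and set $K_1=\{i:|i/n-x|<\delta\}$, $K_2=\{i:|i/n-x|\geq\delta\}$. The coefficient functions $\beta(x,n)$, $\gamma(x,n)$, $\delta(x,n)$ in (\ref{K3}) are polynomials in $x$ whose coefficients are polynomials in $n$ of degree at most two, so there is a constant $C$, independent of $n$ and of $x\in[0,1]$, with $|\beta(x,n)|\leq Cn^{2}$, $|\gamma(x,n)|\leq Cn^{2}$, $|\delta(x,n)|\leq Cn^{2}$. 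Since all the fundamental polynomials $p_{m,j}$ occurring in (\ref{K3}) are nonnegative, the triangle inequality bounds $|\tilde{ U}_n^{3}(g;x)|$ by the genuinely positive operator obtained from $\tilde{ U}_n^{3}$ on replacing $\beta,\gamma,\delta$ by $|\beta|,|\gamma|,|\delta|$ and $g$ by $|g|$. On $K_1$ one estimates $|\xi(t,x)(t-x)^{10}|\leq\varepsilon(t-x)^{10}$ and is left with $Cn^{2}\varepsilon$ times a tenth-order central-moment quantity of the form $(n-1)\sum_{k}p_{n-4,k}(x)\int_0^1 p_{n-2,k-1}(t)(t-x)^{10}\,dt$ (together with the exponentially small endpoint terms $(1-x)^{n-4}x^{10}$ and $x^{n-4}(1-x)^{10}$), which is ${ O}(n^{-5})$; thus this part contributes $\varepsilon\cdot{ O}(n^{-3})$. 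On $K_2$, writing $M=\sup_{0\leq t\leq 1}|\xi(t,x)|(t-x)^{10}<\infty$ and $|\xi(t,x)|(t-x)^{10}\leq (M/\delta^{10})(k/n-x)^{10}$, one is left with $Cn^{2}M\delta^{-10}$ times $\sum_{k}p_{n-4,k}(x)(k/n-x)^{10}$ and its analogues, that is, tenth central moments of the Bernstein operator $B_{n-4}$, again ${ O}(n^{-5})$; so this part is ${ O}(n^{-3})$. Combining the two estimates yields (\ref{eG3}) and hence the theorem.

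The step I expect to be the main obstacle is precisely this balance inside (\ref{eG3}): compared with $\tilde{ U}_n^{2}$ the multipliers $\beta,\gamma,\delta$ now grow like $n^{2}$ instead of $n$, and this loss must be outweighed by the gain in passing from the sixth to the tenth moment, which lowers the size of the ``bare'' moments from ${ O}(n^{-3})$ to ${ O}(n^{-5})$. The only genuinely laborious point is confirming the ${ O}(n^{-5})$ order of the mismatched Bernstein--Durrmeyer moments $(n-1)\sum_{k}p_{n-4,k}(x)\int_0^1 p_{n-2,k-1}(t)(t-x)^{10}\,dt$ and of $\sum_{k}p_{n-4,k}(x)(k/n-x)^{10}$; this is a routine but lengthy computation with the Beta-integral and Bernstein-moment identities that already underlie the preceding Lemma, and it is there that one must take care not to lose a power of $n$.
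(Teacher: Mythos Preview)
The paper states this theorem without proof, presumably because it is obtained by the same scheme as the preceding result for $\tilde{U}_n^{2}$; your proposal carries out exactly that adaptation---Taylor expansion to order ten, the $K_1/K_2$ splitting, and absorbing the $O(n^{2})$ growth of $\beta,\gamma,\delta$ into the $O(n^{-5})$ decay of the tenth-order moments---and is correct.
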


\section{Numerical Example}
Some numerical examples with illustrative graphics have been added to validate the theoretical results and also compare the rate of convergence by using Maple algorithms.

\begin{example} Let $g(x)=\displaystyle\sin(4\pi x)+4\sin\left(\frac{1}{4}\pi x\right)$, $n=10$, $\alpha_0(n)=\displaystyle\frac{n-1}{2n}$ and $\alpha_1(n)=\displaystyle\frac{1}{n}$.
	The convergence of the modified genuine Bernstein-Durrmeyer operators is illustrated in Figure \ref{fig:A1}.  Let $\varepsilon_n(f;x)=\left| g(x)-{ U}_n(g;x)\right|$ and $\varepsilon_n^{i}(g;x)=\left| g(x)-\tilde{ U}_n^{i}(g;x)\right|$, $i=1,2,3$ be the error of approximation for  genuine Bernstein-Durrmeyer operators and the modified genuine Bernstein-Durrmeyer operators, respectively. The error of approximation is illustrated in Figure \ref{fig:A2} and for this particular case  the approximation by the modified genuine Bernstein-Durrmeyer operators $\tilde{ U}_n^{i}$, $i=1,2,3$ is better then using classical genuine Bernstein-Durrmeyer operator ${ U}_n$.  Also, in Table \ref{table1} we computed the error of approximation for ${ U}_n$ and $\tilde{ U}_n^{i}$, $i=1,2,3$ at certain points.
	
\begin{minipage}{\linewidth}
	\centering
	\begin{minipage}{0.44\linewidth}
		\begin{figure}[H]
			\includegraphics[width=\linewidth]{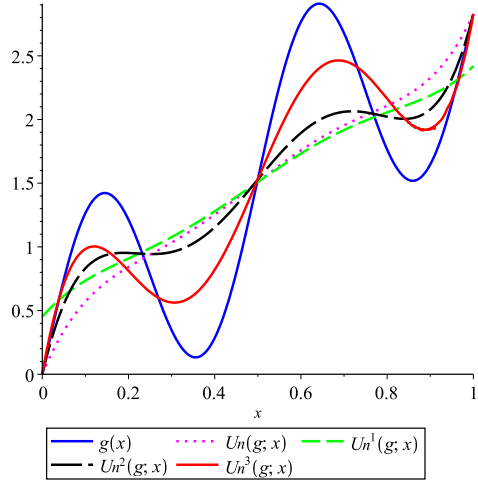}
			\caption{Convergence of the modified genuine Bernstein-Durrmeyer operators}\label{fig:A1}
		\end{figure}
	\end{minipage}
	\hspace{0.05\linewidth}
	\begin{minipage}{0.44\linewidth}
		\begin{figure}[H]
			\includegraphics[width=\linewidth]{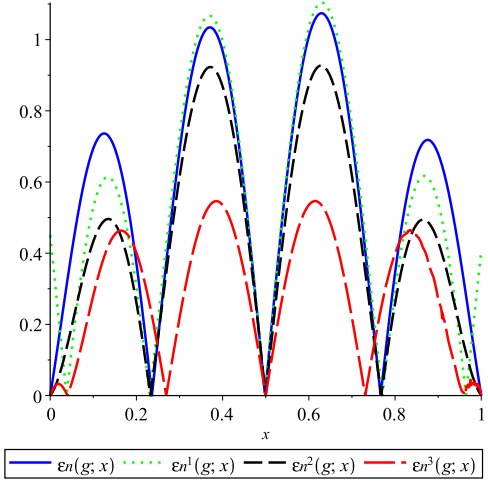}
			\caption{Error of approximation by the modified genuine Bernstein-Durrmeyer operators }\label{fig:A2}
		\end{figure}
	\end{minipage}
\end{minipage}
	\begin{tb}\label{table1}\centering
		{\it  Error of approximation  $\varepsilon_n$ and $\varepsilon_n^{i}$, $i=1,2,3$}
		
		$  $
		
		\begin{tabular}{|l|l|l|l|l|}\hline
			$x$&$\varepsilon_n(g;x)$&$\varepsilon_n^{1}(g;x)$&$\varepsilon_n^{2}(g;x)$&$\varepsilon_n^{3}(g;x)$ \\  \hline
			$0.10$ & $0.6945330253$  &  $0.5295762286$ & $0.4330397338$ & $0.2812687901$\\  \hline
			$0.15$ & $0.6942110247$ & $0.5933130652$ & $0.4833262822$ & $0.4529855958$ 	\\  \hline
			$0.25$ & $0.1580679909$ & $0.2097384650$ & $0.1632564535$ & $0.1344025490$ 	\\  \hline
			$0.30$ & $0.6879044013$ & $0.7312808277$ & $0.6136368403$ & $0.2177095258$ 	\\  \hline
			$0.40$ & $0.9681814067$ & $0.9948377556$ & $0.8683959826$ & $0.5352250570$ 	\\  \hline
			$0.50$ & $0.0213435310$ & $0.0213436470$ & $0.0022170860$ & $0.0000572989$ 	\\  \hline
			$0.55$ & $0.6243511367$ & $0.6381381849$ & $0.5355754034$ & $0.3459331618$ 	\\  \hline
			$0.70$ & $0.7234749900$ & $0.7601315856$ & $0.6172299990$ & $0.2177088936$ 	\\  \hline
			$0.75$ & $0.1896839607$ & $0.2308635388$ & $0.1663414950$ & $0.1338132004$ 	\\  \hline
			$0.90$ & $0.6796558035$ & $0.5415051330$ & $0.4322424640$ & $0.2822400676$ 	\\  \hline
			$0.95$ & $0.3964880751$ & $0.1546334112$ & $0.194510818$ & $0.0192152324$ 	\\  \hline
		\end{tabular}
	\end{tb}
\end{example}

\begin{example}
	Let 
	$ g(x)=\left| x-\frac{1}{4} \right|\cos(4\pi x). $  For $n=10$, $\alpha_0(n)=\displaystyle\frac{n-1}{2n}$ and $\alpha_1(n)=\displaystyle\frac{1}{n}$, the convergence of the modified genuine Bernstein-Durrmeyer operators to  $g(x)$ is shown in Figure \ref{A3}. The errors of approximation  $\varepsilon_n$ and $\varepsilon_n^{i}$, $i=1,2,3$ are illustrated in Figure \ref{A4}. 
Also, in Table \ref{table2} we computed the error of approximation for ${ U}_n$ and $\tilde{ U}_n^{i}$, $i=1,2,3$ at certain points.

\begin{minipage}{\linewidth}
	\centering
	\begin{minipage}{0.44\linewidth}
		\begin{figure}[H]
			\includegraphics[width=\linewidth]{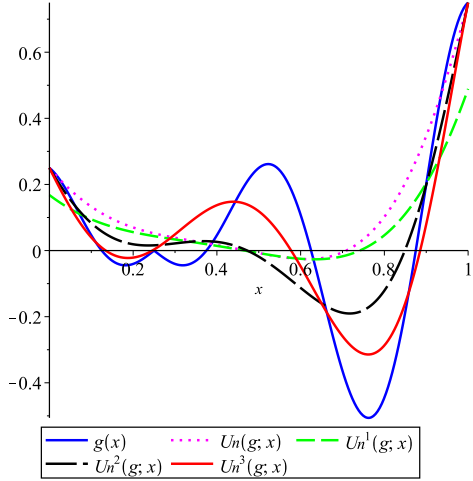}
			\caption{Convergence of the modified genuine Bernstein-Durrmeyer operators}\label{A3}
		\end{figure}
	\end{minipage}
	\hspace{0.05\linewidth}
	\begin{minipage}{0.43\linewidth}
		\begin{figure}[H]
			\includegraphics[width=\linewidth]{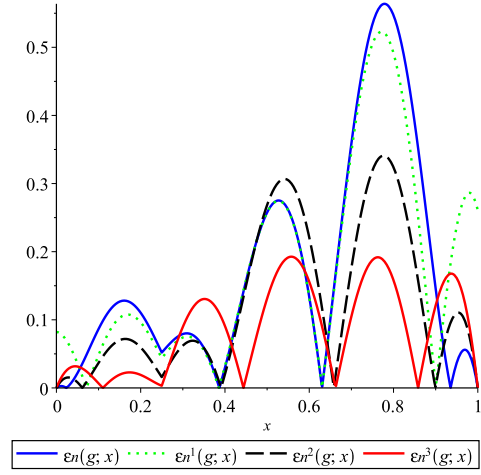}
			\caption{Error of approximation  $\varepsilon_n$ and $\varepsilon_n^{i}$, $i=1,2,3$ }\label{A4}
		\end{figure}
	\end{minipage}
\end{minipage}
\begin{tb}\label{table2}\centering
	{\it  Error of approximation  $\varepsilon_n$ and $\varepsilon_n^{i}$, $i=1,2,3$}
	
	$  $
	
	\begin{tabular}{|l|l|l|l|l|}\hline
		$x$&$\varepsilon_n(g;x)$&$\varepsilon_n^{1}(g;x)$&$\varepsilon_n^{2}(g;x)$&$\varepsilon_n^{3}(g;x)$ \\  \hline
		$0.10$ & $0.0864633606$  &  $0.0482073753$ & $0.0367065261$ & $0.0067324201$\\  \hline
		$0.20$ & $0.1103333385$  &  $0.0961865298$ & $0.0594456124$ & $0.0191120699$\\  \hline
		$0.25$ & $0.0518922812$  &  $0.0436031934$ & $0.0158446516$ & $0.0031661470$\\  \hline
		$0.50$ & $0.2575784301$  &  $0.2575784294$ & $0.2685059808$ & $0.1312442040$\\  \hline
		$0.55$ & $0.2614871004$  &  $0.2604287058$ & $0.3049134784$ & $0.1915072058$\\  \hline
		$0.65$ & $0.1006465455$  &  $0.0976735308$ & $0.0362205580$ & $0.0365602345$\\  \hline
		$0.70$ & $0.3587223234$  &  $0.3466837357$ & $0.1760351064$ & $0.1049635359$\\  \hline
		$0.85$ & $0.3901466578$  &  $0.2963782143$ & $0.1888707949$ & $0.0267144753$\\  \hline
		$0.90$ & $0.1458116388$  &  $0.0029193836$ & $0.0022766453$ & $0.1211619468$\\  \hline
	\end{tabular}
\end{tb}
\end{example}

\begin{example}
	Let  $g(x)=\left(x-\frac{1}{4}\right)\sin(2\pi x)$. The behaviours of the approximations ${ U}_n(g;x)$, $\tilde{ U}_n^{i}(g;x)$, $i=1,2,3$ and their errors  $\varepsilon_n(g;x)$, $\varepsilon_n^{i}(g;x)$ for $n=5,7,10$, $\alpha_0(n)=\displaystyle\frac{n-1}{2n}$, $\alpha_1(n)=\displaystyle\frac{1}{n}$ are illustrated in the Figures \ref{A5}-\ref{A12}. 
Also, in Tables \ref{table3}-\ref{table6} we computed the error of approximation for ${ U}_n$ and $\tilde{ U}_n^{i}$, $i=1,2,3$, for $n=5,7,10$ at certain points.
\end{example}

\begin{minipage}{\linewidth}
\centering
\begin{minipage}{0.44\linewidth}
	\begin{figure}[H]
		\includegraphics[width=\linewidth]{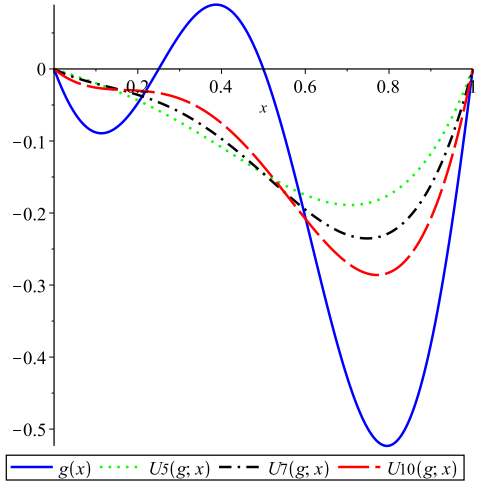}
		\caption{Convergence of the genuine Bernstein-Durrmeyer operators  ${ U}_n$  }\label{A5}
	\end{figure}
\end{minipage}
\hspace{0.05\linewidth}
\begin{minipage}{0.43\linewidth}
	\begin{figure}[H]
		\includegraphics[width=\linewidth]{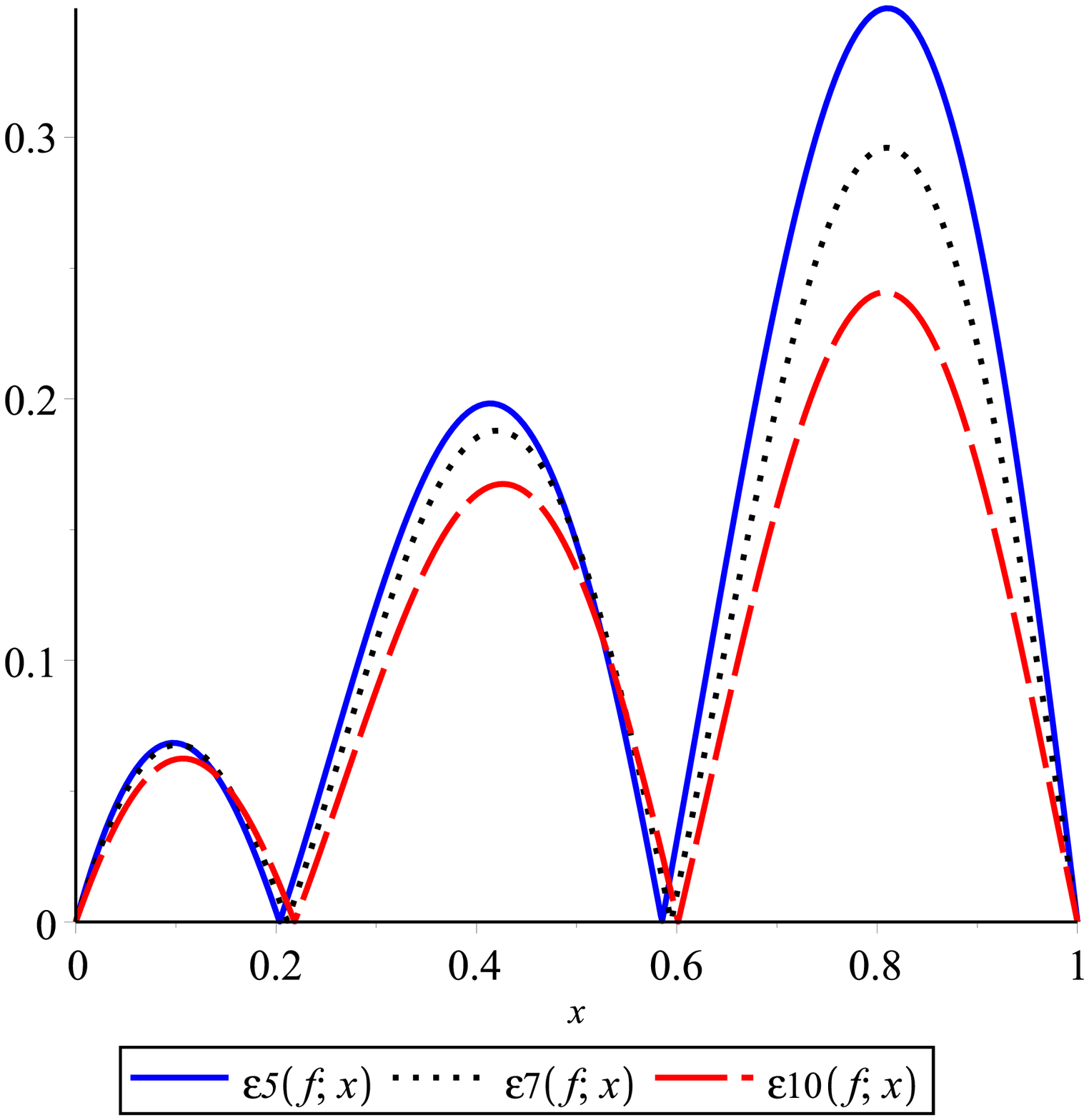}
		\caption{Error of approximation  $\varepsilon_n$  }\label{A6}
	\end{figure}
\end{minipage}
\end{minipage}

$$  $$

\begin{tb}\label{table3}\centering
{\it  Error of approximation  $\varepsilon_n$ for $n=5,7,10$}

$   $

\begin{tabular}{|l|l|l|l|}\hline
	$x$&$\varepsilon_5(g;x)$&$\varepsilon_{7}(g;x)$&$\varepsilon_{10}(g;x)$ \\  \hline
	$0.10$ & $0.0684403678$  &  $0.0678031572$ & $0.0622151561$ \\  \hline
	$0.15$ & $0.0499431989$  &  $0.0531270433$ & $0.0521857868$ \\  \hline
	$0.25$ & $0.0578940551$  &  $0.0463235563$ & $0.0334294267$ \\  \hline
	$0.30$ & $0.1212531914$  &  $0.1072975170$ & $0.0886731506$ \\  \hline
	$0.40$ & $0.1969741168$  &  $0.1851488999$ & $0.1634615511$ \\  \hline
	$0.65$ & $0.1391012080$  &  $0.1078116400$ & $0.0816688494$ \\  \hline
	$0.70$ & $0.2391101684$  &  $0.1978330494$ & $0.1589711006$ \\  \hline
	$0.85$ & $0.3328374955$  &  $0.2808786960$ & $0.2265106612$ \\  \hline
	$0.90$ & $0.2644985309$  &  $0.2206728121$ & $0.1747401754$ \\  \hline
\end{tabular}
\end{tb}

\newpage


\begin{minipage}{\linewidth}
	\centering
	\begin{minipage}{0.43\linewidth}
		\begin{figure}[H]
			\includegraphics[width=\linewidth]{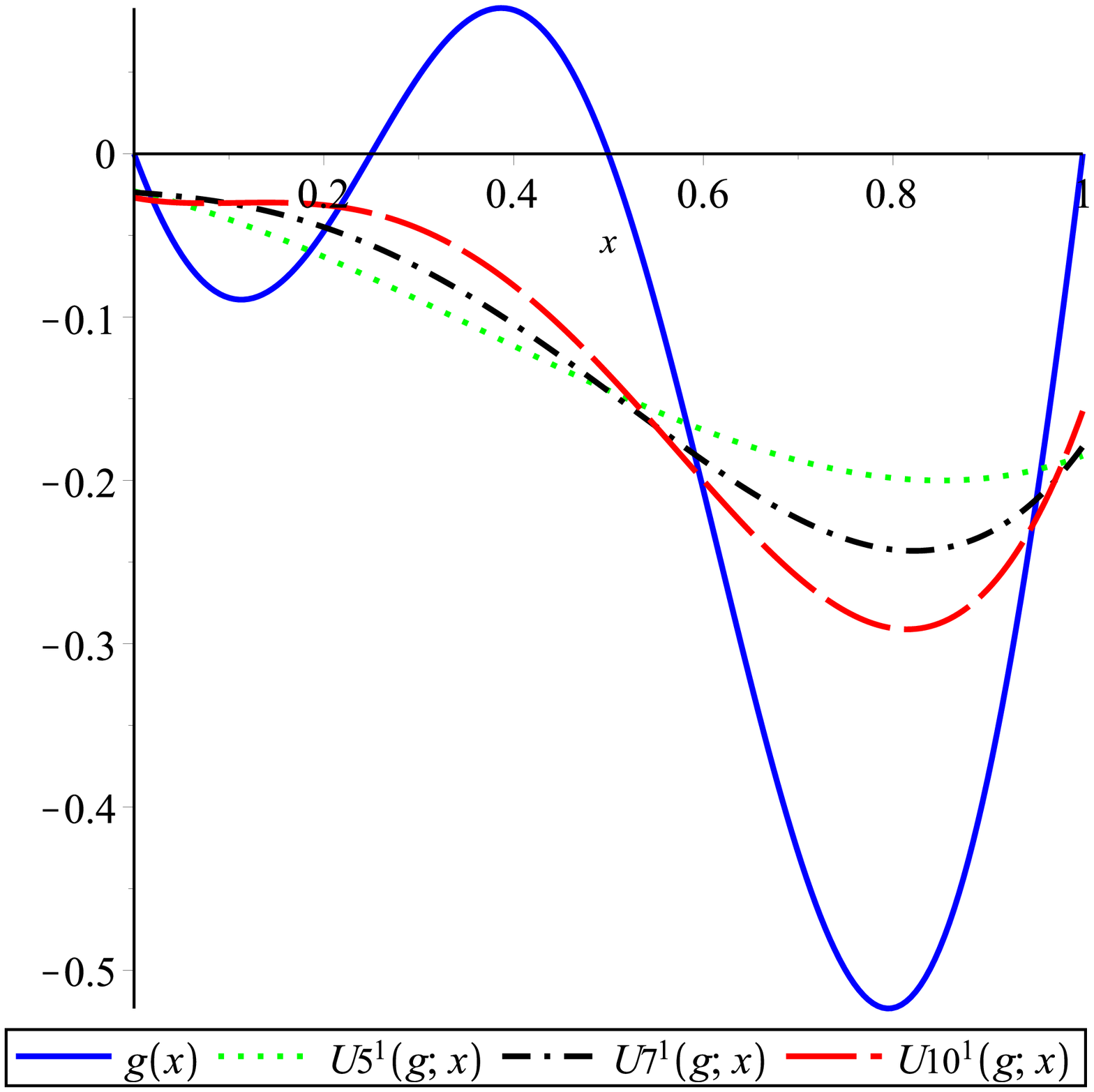}
			\caption{Convergence of the modified genuine Bernstein-Durrmeyer operators  ${ U}_n^{1}$ }\label{A7}
		\end{figure}
	\end{minipage}
	\hspace{0.05\linewidth}
	\begin{minipage}{0.43\linewidth}
		\begin{figure}[H]
			\includegraphics[width=\linewidth]{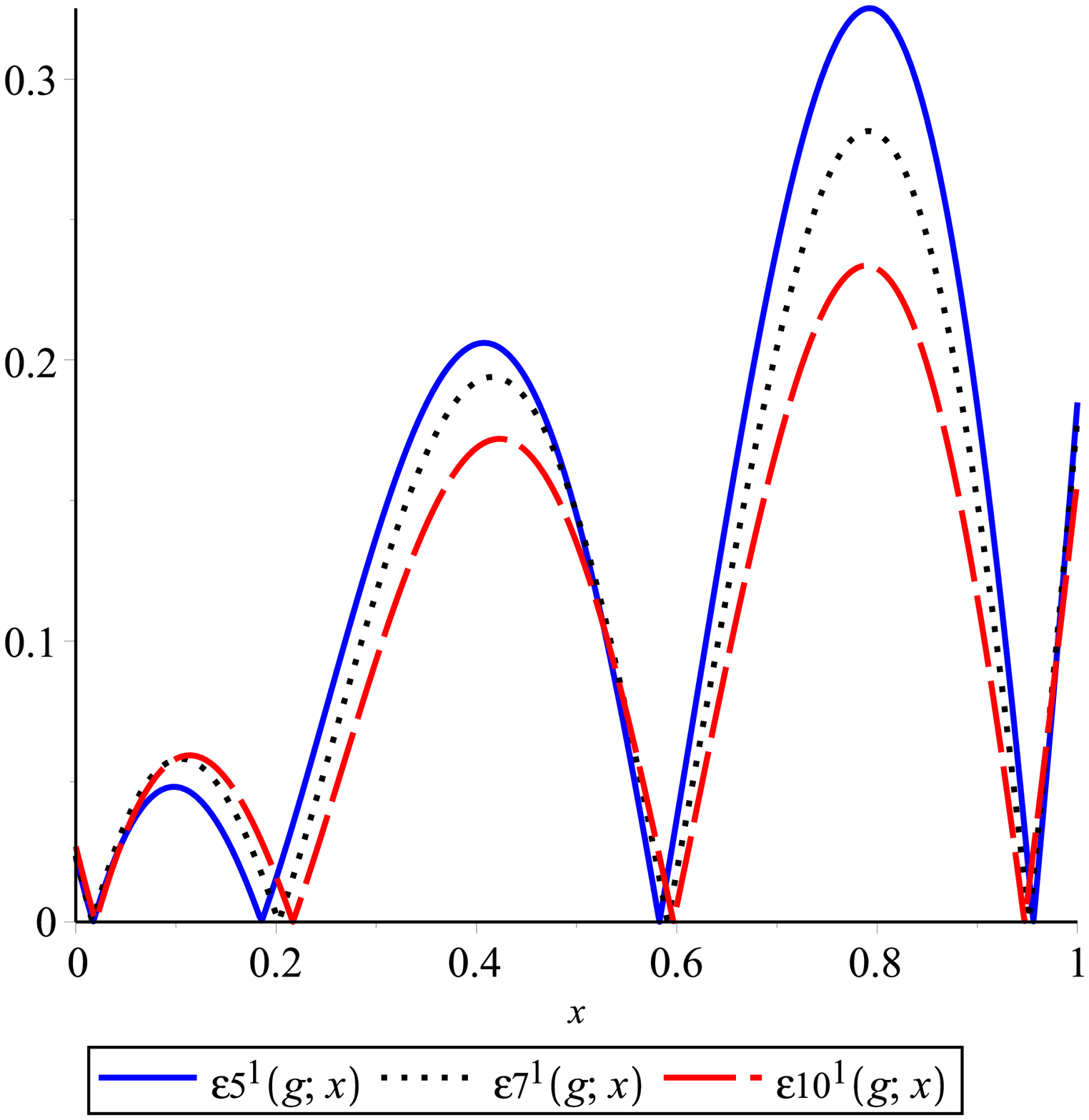}
			\caption{Error of approximation  $\varepsilon_n^{1}$  }\label{A8}
		\end{figure}
	\end{minipage}
\end{minipage}

$$  $$

\begin{tb}\label{table4}\centering
	{\it  Error of approximation  $\varepsilon_n^{1}$ for $n=5,7,10$}
	
	$  $
	
	\begin{tabular}{|l|l|l|l|}\hline
		$x$&$\varepsilon_5(g;x)$&$\varepsilon_{7}(g;x)$&$\varepsilon_{10}(g;x)$ \\  \hline
		$0.25$ & $0.0759640671$  &  $0.0559388716$ & $0.0362644403$ \\  \hline
		$0.30$ & $0.1371072386$  &  $0.1171805319$ & $0.0932932589$ \\  \hline
		$0.35$ & $0.1844145994$  &  $0.1666950554$ & $0.1413524766$ \\  \hline
		$0.40$ & $0.2057325249$  &  $0.1923019252$ & $0.1687116616$ \\  \hline
		$0.50$ & $0.1448255593$  &  $0.1453442003$ & $0.1349064978$ \\  \hline
		$0.60$ & $0.0366918591$  &  $0.0179868291$ & $0.0057986703$ \\  \hline
		$0.70$ & $0.2402023843$  &  $0.2045318798$ & $0.1686629702$ \\  \hline
		$0.80$ & $0.3245837174$  &  $0.2807124726$ & $0.2325391597$ \\  \hline
		$0.90$ & $0.1835617165$  &  $0.1497215453$ & $0.1155690815$ \\  \hline
	\end{tabular}
\end{tb}

\newpage


\begin{minipage}{\linewidth}
	\centering
	\begin{minipage}{0.44\linewidth}
		\begin{figure}[H]
			\includegraphics[width=\linewidth]{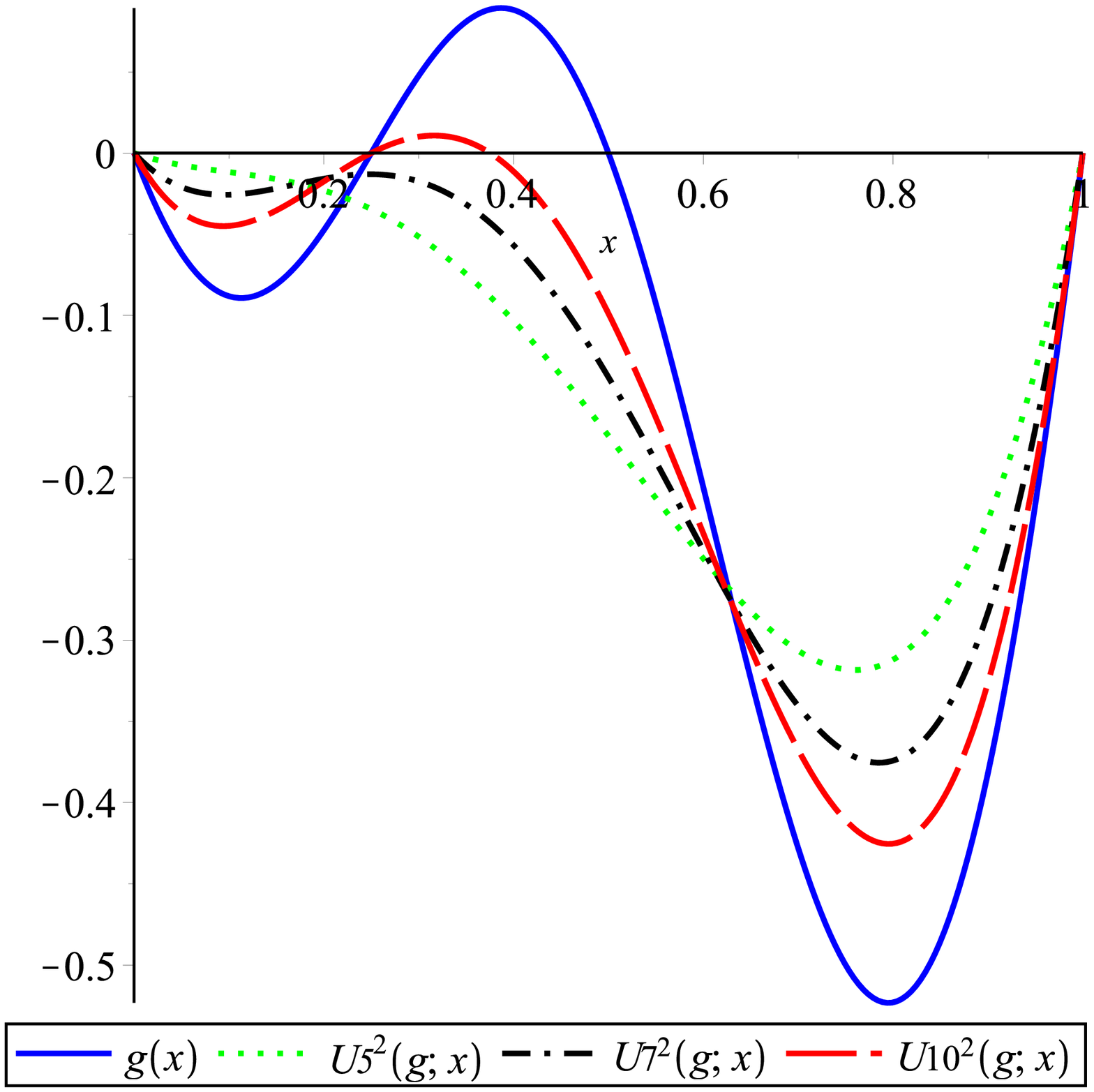}
			\caption{Convergence of the modified genuine Bernstein-Durrmeyer operators  ${ U}_n^{2}$ }\label{A9}
		\end{figure}
	\end{minipage}
	\hspace{0.05\linewidth}
	\begin{minipage}{0.43\linewidth}
		\begin{figure}[H]
			\includegraphics[width=\linewidth]{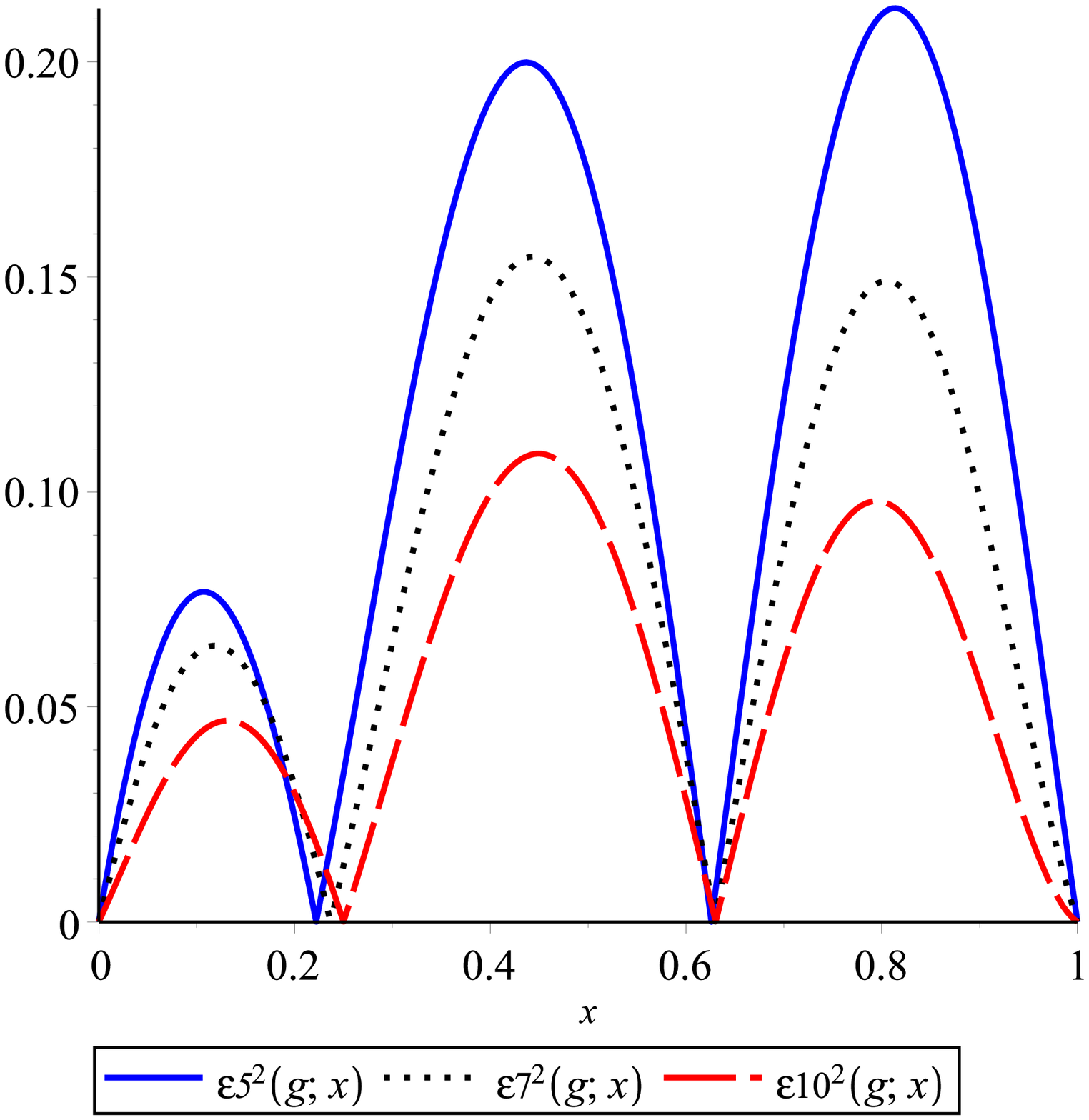}
			\caption{Error of approximation  $\varepsilon_n^{2} $ }\label{A10}
		\end{figure}
	\end{minipage}
\end{minipage}

$$   $$

\begin{tb}\label{table5}\centering
	{\it  Error of approximation  $\varepsilon_n^{2}$ for $n=5,7,10$}
	
	$  $
	
	\begin{tabular}{|l|l|l|l|}\hline
		$x$&$\varepsilon_5(g;x)$&$\varepsilon_{7}(g;x)$&$\varepsilon_{10}(g;x)$ \\  \hline
		$0.10$ & $0.0764438176$  &  $0.0624790953$ & $0.0432951373$ \\  \hline
		$0.25$ & $0.0345024248$  &  $0.0130565516$ & $0.0000245123$ \\  \hline
		$0.30$ & $0.0990500279$  &  $0.0650077665$ & $0.0375060681$ \\  \hline
		$0.40$ & $0.1914411452$  &  $0.1449016157$ & $0.0994953904$ \\  \hline
		$0.50$ & $0.1740970203$  &  $0.1380269700$ & $0.0987547084$ \\  \hline
		$0.60$ & $0.0439765069$  &  $0.0388439363$ & $0.0284303826$ \\  \hline
		$0.70$ & $0.1215101036$  &  $0.0874936435$ & $0.0605939578$ \\  \hline
		$0.80$ & $0.2110574499$  &  $0.1488955530$ & $0.0977772009$ \\  \hline
		$0.90$ & $0.1561838905$  &  $0.0989074740$ & $0.0557077924$ \\  \hline
	\end{tabular}
\end{tb}

\newpage

\begin{minipage}{\linewidth}
	\centering
	\begin{minipage}{0.45\linewidth}
		\begin{figure}[H]
			\includegraphics[width=\linewidth]{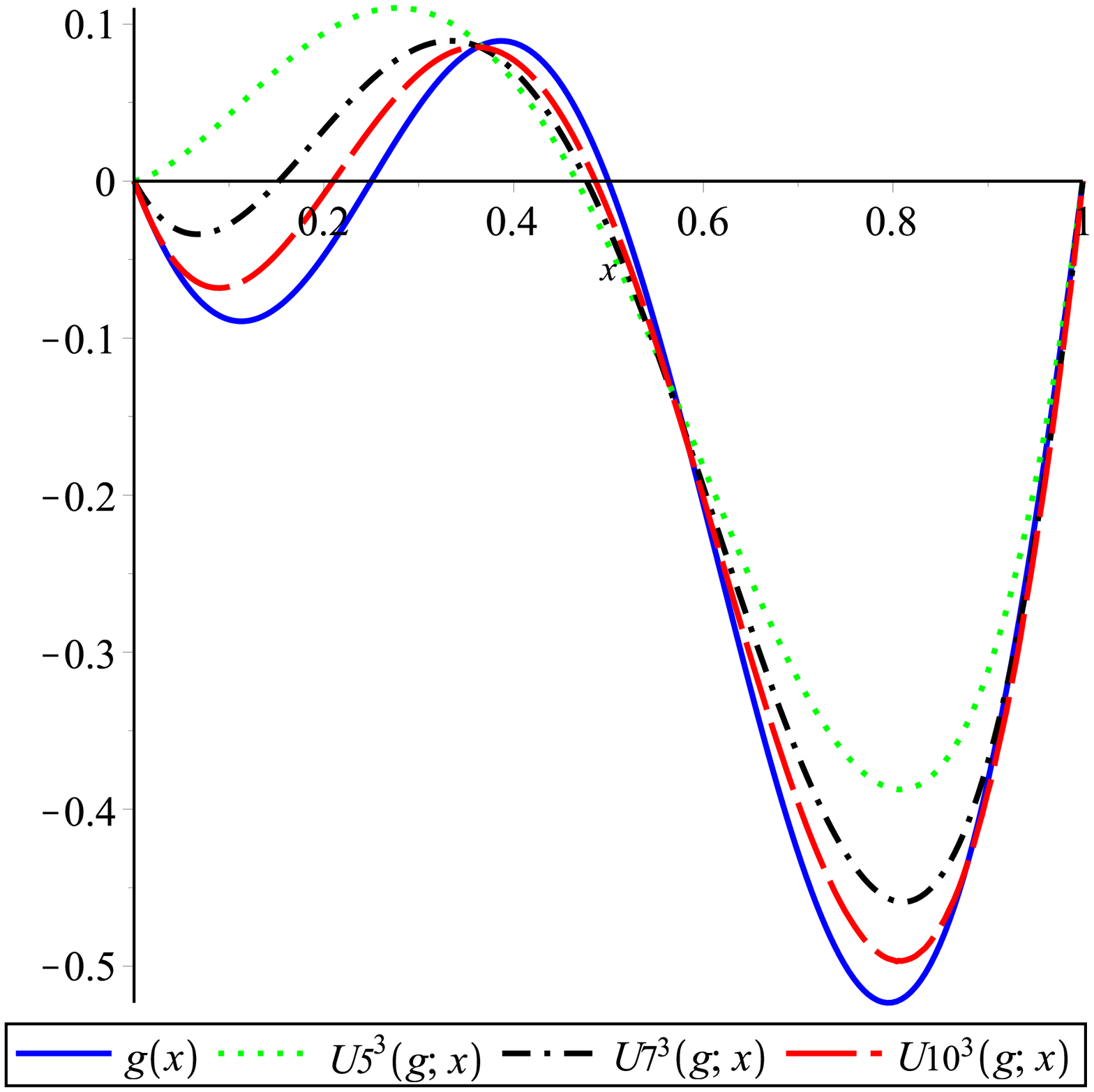}
			\caption{Convergence of the modified genuine Bernstein-Durrmeyer operators  ${ U}_n^{3}$ }\label{A11}
		\end{figure}
	\end{minipage}
	\hspace{0.05\linewidth}
	\begin{minipage}{0.42\linewidth}
		\begin{figure}[H]
			\includegraphics[width=\linewidth]{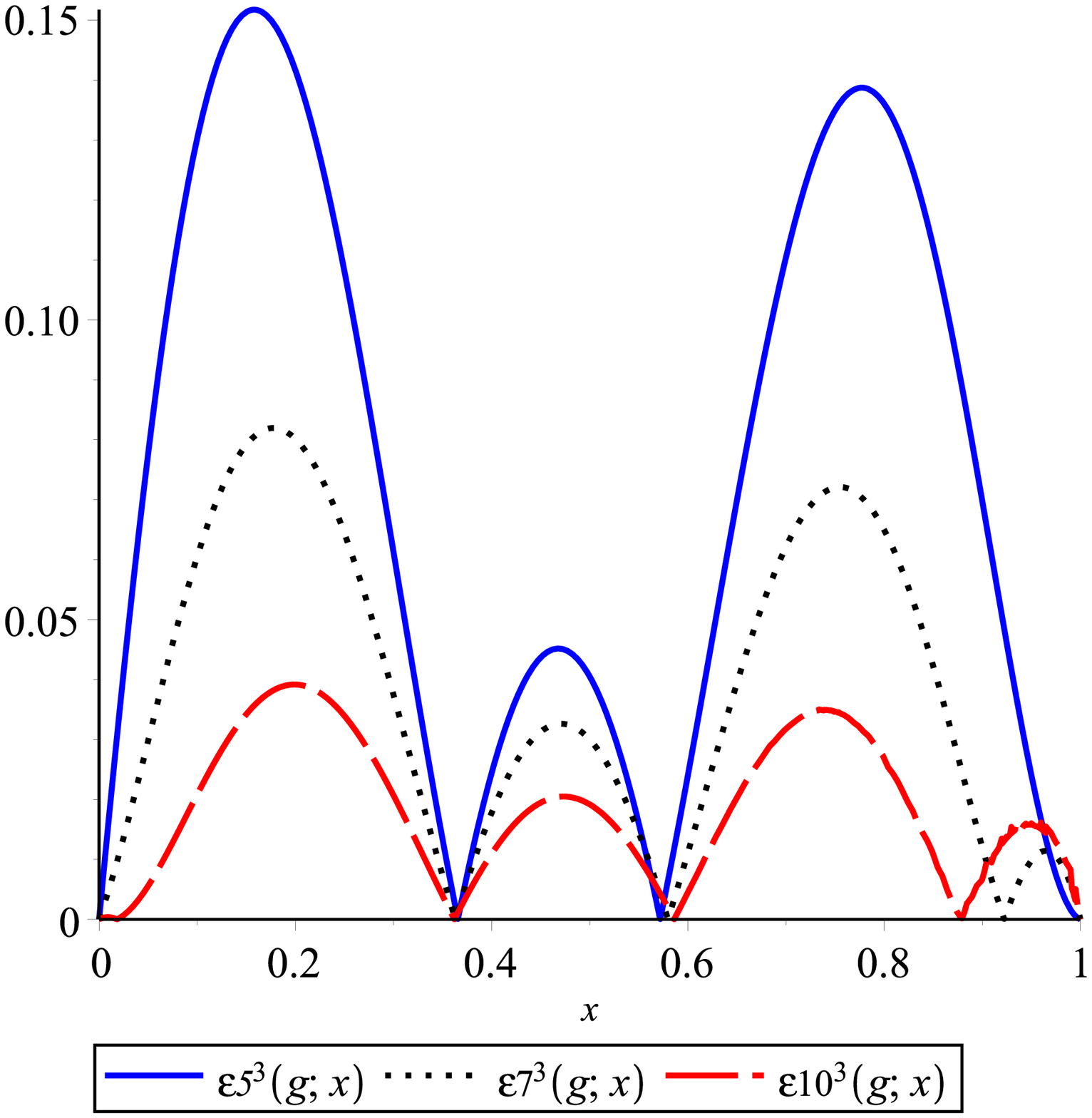}
			\caption{Error of approximation  $\varepsilon_n^{3}$  }\label{A12}
		\end{figure}
	\end{minipage}
\end{minipage}
$  $

\begin{tb}\label{table6}\centering
	{\it  Error of approximation  $\varepsilon_n^{3}$ for $n=5,7,10$}
	
	$  $
	
	\begin{tabular}{|l|l|l|l|}\hline
		$x$&$\varepsilon_5(g;x)$&$\varepsilon_{7}(g;x)$&$\varepsilon_{10}(g;x)$ \\  \hline
		$0.10$ & $0.130304351$  &  $0.060532301$ & $0.020830446$ \\  \hline
		$0.20$ & $0.141691111$  &  $0.080159674$ & $0.039173543$ \\  \hline
		$0.30$ & $0.061291162$  &  $0.037462997$ & $0.020686153$ \\  \hline
		$0.40$ & $0.024416705$  &  $0.017774084$ & $0.010936011$ \\  \hline
		$0.45$ & $0.043687998$  &  $0.031315020$ & $0.019471529$ \\  \hline
		$0.50$ & $0.040509305$  &  $0.029993035$ & $0.019169082$ \\  \hline
		$0.60$ & $0.024027791$  &  $0.011127560$ & $0.004477202$ \\  \hline
		$0.70$ & $0.110487435$  &  $0.062167445$ & $0.031839458$ \\  \hline
		$0.80$ & $0.136082209$  &  $0.064976021$ & $0.026722598$ \\  \hline
		$0.90$ & $0.069748506$  &  $0.012020135$ & $0.006394035$ \\  \hline
	\end{tabular}
\end{tb}

$$  $$

{\bf Acknowledgments.} The work  was financed from Lucian Blaga University of Sibiu research grants LBUS-IRG-2018-04.

$   $

\noindent{\bf References}

\end{document}